\documentclass[english, 12pt]{amsart}

\usepackage{geometry}
\usepackage[T1]{fontenc}
\usepackage[utf8]{inputenc}
\usepackage{babel}
\usepackage{amsmath}
\usepackage{amssymb}
\usepackage{amsthm}
\usepackage{textcomp}
\usepackage{enumitem}
\usepackage{dsfont}
\usepackage{mathrsfs}
\usepackage{cite}
\usepackage{color}
\usepackage[colorlinks=true,urlcolor=blue,linkcolor=blue]{hyperref}

\newcommand{\R}{\mathbb{R}}

\newcommand{\C}{\mathbb{C}}

\newcommand{\ii}{\mathrm{i}}
\newcommand{\dd}{\,\mathrm{d}}
\newcommand{\ee}{\mathrm{e}}

\renewcommand{\varepsilon}{\epsilon}

\def\build#1_#2^#3{\mathrel{
\mathop{\kern 0pt#1}\limits_{#2}^{#3}}}
\def\td_#1,#2{\mathrel{\mathop{\build\longrightarrow_{#1\rightarrow #2}^{}}}}
\def\dl_#1,#2{\mathrel{\mathop{\build=_{#1\rightarrow #2}^{}}}}
\renewcommand{\Im}{\mathrm{Im}}
\renewcommand{\Re}{\mathrm{Re}}

\usepackage{tikz}

\newcommand{\longrightarroww}[2] {\mathop{\longrightarrow}\limits_{#1}^{#2}}
\def\tdw_#1,#2{\mathrel{\mathop{\build\rightharpoonup_{#1\rightarrow #2}^{}}}}

\newtheorem{definition}{Definition}[section]
\newtheorem{theorem}[definition]{Theorem}
\newtheorem{lemma}[definition]{Lemma}
\newtheorem{proposition}[definition]{Proposition}
\newtheorem{corollary}[definition]{Corollary}
\newtheorem{remark}[definition]{Remark}
\numberwithin{equation}{section}

\title[Infinite-order multisolitons for the Benjamin--Ono equation]{Infinite-order multisoliton solutions to the Benjamin--Ono equation and soliton resolution}
\date{\today}

\author[L.~Gassot]{Louise Gassot}
\address{CNRS and Department of Mathematics, University of Rennes, France}
\email{louise.gassot@cnrs.fr}

\author[P.~Gérard]{Patrick Gérard}
\address{Laboratoire de Mathématiques d'Orsay,  Université Paris-Saclay, Orsay, France} \email{patrick.gerard@universite-paris-saclay.fr}

\keywords{Benjamin--Ono equation, soliton, long-time behavior, integrability, spectral theory}

\subjclass{37K10, 35B40, 35Q51}

\begin{document}

\maketitle

\begin{abstract}
We construct a class of infinite-order multisoliton solutions of the Benjamin-Ono equation on the line, for which the initial data exhibits slow spatial decay. We prove that in the long-time asymptotics, such a solution decouples as an infinite superposition of independent soliton solutions with different velocities and no radiation term.
\vskip 0.25cm 
\centerline{\em Dedicated to Yoshio Tsutsumi, on the occasion of his seventieth birthday}
\end{abstract}

\setcounter{tocdepth}{1}
\tableofcontents

\section{Introduction and main result}\label{sec1}

This paper is devoted to the long-time behavior of solutions of the Benjamin--Ono equation,
\begin{equation}\label{BOinit}
\partial_tu-\partial_x|D_x|u+\partial_x(u^2)=0\ ,\ u(0,x)=u_0(x).
\end{equation}
The function $u=u(t,x)$ is real--valued, and $|D_x|$ is the Fourier multiplier associated to the symbol $|\xi |$ acting on functions on the real line. Equation \eqref{BOinit} was introduced in 1967 \cite{Benjamin67} in order to model long, unidirectional internal gravity waves 
in a two-layer fluid with infinite depth, see e.g. the book \cite{KleinS21} for more detail and \cite{Paulsen24} for a recent derivation.

We recall that an important class of  solutions of the Benjamin-Ono equation~\eqref{BOinit} are the traveling wave solutions, 
\[ u(t,x)=u_0(x-ct).\]
These solutions  were characterized in~\cite{AmickToland1991}, and have the form
\begin{equation}\label{eq:Rp}
u(t,x)=R_{p}(x-c_pt),
\quad
R_p(x)=\frac{2 \Im(p)}{|x+p|^2},
\quad
c_p=\frac{1}{\Im(p)},
\end{equation}
where $p\in\C_+=\{z\in\C \;\colon\; \Im(z)>0\}$ is a complex number with positive imaginary part. Note that the $L^2$ norm of such a solution is given by
\[
\|R_p\|_{L^2}^2=4\pi c_p.
\]
In this paper, these solutions will be called soliton solutions.

It has been recently proved in~\cite{GassotGM26} that, if the initial data $u_0\in H^1(\R)$ is sufficiently decaying at infinity,  the solution $u(t,\cdot )$ can be written as $t\to \infty$ as a finite superposition of soliton solutions with different velocities and a radiative remainder term. In this setting, the  number of solitons involved in this soliton resolution is shown to be finite thanks to the sufficient spatial decay of the initial data, in particular $xu_0\in L^2(\R)$. This includes the special case of multisoliton solutions, corresponding to initial data which are finite sums of functions $R_{p_j}$, and which was studied before in \cite{Matsuno79} and revisited in \cite{Sun2020}. In the latter case, the radiative remainder term is going to $0$  in any Sobolev space as $t\to \infty $.

  The goal of this paper is to investigate a similar long-time asymptotics for a family of initial data for which the number of solitons is infinite. In particular, these initial data will satisfy $u_0\in L^2(\R)$ but $x u_0\not\in L^2(\R)$.

\begin{definition}[Infinite-order multisoliton]\label{def:u0}
We fix a sequence $(p_j)_{j\geq 1}$ of points in the upper half-plane $\C_+$.  We say that $u_0$ is an infinite-order multisoliton in $L^2(\R)$ if $u_0$ is given by the formula
\begin{equation}\label{eq:u0}
u_0(x):=\sum_{j= 1}^{\infty}\frac{2\Im(p_j)}{|x+p_j|^2},
\quad x\in\R,
\end{equation}
and if the parameters $(p_j)_{j\geq 1}$ satisfy the condition
\begin{equation}\label{eq:norm}
\sum_{j=1}^{\infty}\sum_{k=1}^{\infty}\frac{\Im(p_j)}{|p_j-\overline{p_k}|^2}<\infty.
\end{equation}
\end{definition}

Our main result states that an infinite-order multisoliton in $L^2(\R)$ can be written as a superposition of solitons with different velocities in the long-time asymptotics, up to a small remainder term if the number of solitons is taken large enough.

\begin{remark}\label{rem:charact}
We can show that $u_0$ is an infinite-order multisoliton according to Definition~\ref{def:u0} if and only if $u_0$ belongs to the $L^2$-closure of finite multisolitons and is not a finite multisoliton. See Appendix~\ref{sec:appendix} for a proof.

As a consequence, one can show that the characterization given by Definition~\ref{def:u0} is propagated by the Benjamin-Ono flow. Indeed, let $u_0$ be an infinite-order multisoliton, and write $u_0$ as the $L^2$-limit of a family of multisolitons $(u_0^N)_{N\geq 1}$. Let $u_N$ and $u$ be the solutions to the Benjamin-Ono equation with initial data $u_N(0)=u_0^N$ and $u(0)=u_0$, respectively, and let $t\in\R$. By continuity of the Benjamin-Ono flow map in $L^2$ (see for instance~\cite{KillipLaurensVisan23-BO}), we deduce that $u_N(t)$ converges to $u(t)$ in $L^2(\R)$ as $N\to\infty$, and in particular $u(t)$ belongs to the $L^2$-closure of multisolitons.
\end{remark}

\begin{theorem}[Soliton resolution for the infinite-order multisoliton]\label{thm:main}
The initial data $u_0$ given by~\eqref{eq:u0} under the condition~\eqref{eq:norm} belong to $H^s(\R)$ for every $s\in \R$. Moreover, there is a sequence $(p_j^{\infty})_{j\geq 1}$ of complex numbers in $\C_+$ with increasing imaginary parts $0<\Im(p_1^\infty)<\Im(p_2^\infty)<\dots$, such that there holds
\begin{equation}\label{eq:resol-sol}
\lim_{N\to\infty}\limsup_{t\to \infty}\Big\|u(t,\cdot)-\sum_{j=1}^NR_{p_j^{\infty}}\left(\cdot-c_{p_k^{\infty}}t\right)\Big\|_{L^2}=0.
\end{equation}
Furthermore, for every $t\in \R$, the formula
\begin{equation}\label{eq:usol}
 u_{\mathrm{sol}}(t,x):=\sum_{j=1}^\infty R_{p_j^\infty}(x-c_{p_j^\infty}t),
 \quad x\in\R,
 \end{equation}
defines a continuous function $u_{\mathrm{sol}}(t,\cdot )$ on $\R$, tending to $0$ as $x\to \infty$, and belonging to all the homogeneous Sobolev spaces $\dot H^s(\R)$ for $s\geq 1/2$. Finally, we have, for every $s\geq 1/2$,
\begin{equation}\label{eq:resol-sol-bis}
\| u(t,\cdot )-u_{\mathrm{sol}}(t,\cdot )\|_{L^\infty}+\| u(t,\cdot )-u_{\mathrm{sol}}(t,\cdot )\|_{\dot H^s}\td_t,\infty 0.
\end{equation}
\end{theorem}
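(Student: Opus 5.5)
The approach uses the Lax pair structure of Benjamin--Ono together with the explicit formula of the second author. The first step is regularity of $u_0$. Write $u_0=\Pi u_0+\overline{\Pi u_0}$, where $\Pi$ is the Szegő projector onto $L^2_+$. Since $R_p(x)=\frac{\ii}{x+p}-\frac{\ii}{x+\overline p}$, we get $\Pi u_0=\sum_j \frac{\ii}{x+p_j}$. The Gram matrix $\langle (x+p_j)^{-1},(x+p_k)^{-1}\rangle=\frac{2\pi\ii}{p_k-\overline{p_j}}$ is positive definite, and condition \eqref{eq:norm} gives Hilbert--Schmidt control of the weighted matrix $\big(\sqrt{\Im p_j}\,(p_j-\overline{p_k})^{-1}\big)$. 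From this I will show that $\sum_j (x+p_j)^{-1}$ converges in $L^2$. For $H^s$, the Fourier transform of $(x+p)^{-1}$ is $c\,\mathbf 1_{\xi>0}e^{\ii p\xi}$, which decays like $e^{-\Im(p)\xi}$. Condition \eqref{eq:norm} with $j=k$ gives $\sum_j 1/\Im(p_j)<\infty$, so $\Im p_j\to\infty$ and $\inf_j\Im p_j>0$. Hence $|\hat u_0(\xi)|\lesssim \sum_j e^{-\Im(p_j)\xi}$ decays exponentially at high frequency, and $u_0\in H^s$ for all $s$.

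The second step is spectral. Let $L_{u}=D-T_u$ be the Lax operator on $L^2_+$. For a finite multisoliton, $\operatorname{span}\{(x+p_j)^{-1}\}$ is $L_u$-invariant and $L_u$ has pure point spectrum there. For the infinite case I would show that $L_{u_0}$ has pure point spectrum with simple negative eigenvalues $\lambda_j=-1/(2\Im p_j^\infty)$ accumulating only at $0$, and no absolutely continuous part. The argument is to approximate $u_0$ in $L^2$ by the finite multisolitons $u_0^N$ (Remark~\ref{rem:charact}) and use strong resolvent convergence, together with the trace identity $\|u\|_{L^2}^2=4\pi\sum_j c_{p_j^\infty}$ valid for the approximants. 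Passing to the limit, and using the fact that the $L^2$ norm equals the eigenvalue sum plus a nonnegative continuous-spectrum contribution, equality in the limit rules out radiation. The $p_j^\infty$ are read off from the eigenvalues (imaginary part) and the phase constants (real part), and they are time-invariant. Because the $u_0^N$-eigenvalues are distinct, the limiting imaginary parts $\Im p_j^\infty$ are strictly increasing, which gives the ordering claimed in Theorem~\ref{thm:main}.

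The third step is the resolution \eqref{eq:resol-sol}. Split $u(t)=u_{\le N}(t)+r_N(t)$ through the spectral projection of $L_{u(t)}$ onto the first $N$ eigenvalues. The finite part evolves by the explicit formula as an $N$-multisoliton. By the finite-order theory (Matsuno/Sun), it decouples into $\sum_{j\le N}R_{p_j^\infty}(\cdot-c_{p_j^\infty}t)$ as $t\to\infty$, because the velocities are distinct. The tail is controlled uniformly in $t$ by conservation: $\|r_N(t)\|_{L^2}^2\lesssim\sum_{j>N}c_{p_j^\infty}\to0$. To make the splitting rigorous I would compare $u$ with $u_N$, the solution issued from $u_0^N$, choosing $u_0^N$ so that its spectral data are exactly the first $N$ eigen-data of $L_{u_0}$. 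A uniform-in-time stability estimate then comes from the conserved $L^2$ difference of spectral data, obtained via the explicit formula. I expect this uniform-in-time approximation, which must beat the loss of continuity of the flow over infinite time, to be the main obstacle. I would attack it through the explicit formula $\Pi u(t,z)=\frac{1}{2\ii\pi}I_+\big((G-t-z)^{-1}\Pi u_0\big)$ combined with the spectral decomposition.

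Finally, for $u_{\mathrm{sol}}$: $\sum_j \|R_{p_j^\infty}\|_{L^\infty}=\sum_j 2/\Im p_j^\infty=\sum_j 2c_{p_j^\infty}<\infty$, which gives continuity and decay at infinity. For $\dot H^s$ with $s\ge1/2$, $\|R_p\|_{\dot H^s}^2\sim \Im(p)^{1-2s}\,\Im(p)^{-2}\cdot\Im(p)$, and this is summable since $\sum_j 1/\Im p_j^\infty<\infty$. For \eqref{eq:resol-sol-bis}, the $L^2$ resolution controls low frequencies. The high-frequency and $L^\infty$ parts follow from the exponential Fourier decay of each soliton at rate $\Im p_j^\infty$, which is uniform in $t$, together with interpolation of the $\dot H^s$ tails. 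The latter are small uniformly in $t$ because the higher conservation laws give $\sum_{j>N}$ bounds that decrease with $s$.
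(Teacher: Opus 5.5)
Steps 1 and 4 are broadly workable. Step 3, however, carries the whole of \eqref{eq:resol-sol}, and it has a genuine gap.

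The head $P_N(t)\Pi u(t)=\sum_{j\le N}\langle \Pi u(t),\varphi_j(t)\rangle\varphi_j(t)$ of your spectral splitting does not ``evolve as an $N$-multisoliton''. The transported eigenfunctions $\varphi_j(t)=U(t)\varphi_j$ depend on the whole solution, so this head is not itself a solution. Already for a finite $M$-soliton with $M>N$ it is in general a rational function with $M$ poles, hence not $\Pi$ of an $N$-soliton.

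The tail is indeed harmless. Since $L_{u(t)}=U(t)L_{u_0}U(t)^*$ and $\Pi u(t)=U(t)g_t(L_{u_0})\Pi u_0$ for a unitary $U(t)$ and a unimodular function $g_t$, its norm stays equal to $(\sum_{j>N}2\pi|\lambda_j|)^{1/2}$. But describing the head as $t\to\infty$ requires the asymptotics of $U(t)\varphi_j$, which is the whole problem.

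Your fallback compares $u$ with an $N$-soliton $u_N$ carrying the first $N$ spectral data of $u_0$, and bounds $\sup_t\|u(t)-u_N(t)\|_{L^2}$ by a conserved distance between spectral data. You give no argument for this estimate. It is a uniform continuity property of the inverse spectral map at $L^2$ regularity on the line, at least as hard as the theorem. Relatedly, the ``phase constants'' meant to give $\Re p_j^\infty$ are never defined. Since $u_0\notin L^1$ (so $xu_0\notin L^2$), the definitions available under decay assumptions do not apply.

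The missing idea is that no comparison of solutions is needed, only weak convergence plus exact bookkeeping of $L^2$ norms. The paper's route is as follows.
\begin{enumerate}
\item Proposition~\ref{prop:phi-domain} shows that for any $u_0\in L^2$ the eigenfunctions satisfy $\varphi_j\in\mathcal D(X^*)$. The proof uses difference quotients of $\widehat\varphi_j$ after splitting off a truncation $u_{0,R}$ of $u_0$.
\item Likewise, $(L_{u_0}-\lambda_j)^{-1}$ maps $\mathcal D(X^*)\cap\varphi_j^\perp$ into $\mathcal D(X^*)$.
\item This defines $p_j^\infty:=-\langle X^*\varphi_j,\varphi_j\rangle$, with $\Im p_j^\infty=1/(2|\lambda_j|)$ by \eqref{I+phi} and Wu's identity.
\item The explicit formula then gives $\Pi u(t,z-2t\lambda_j)\to\ii/(z+p_j^\infty)$ for every $z\in\C_+$. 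Since $\|\Pi u(t)\|_{L^2}$ is conserved, this is weak convergence in $L^2_+$.
\item Expand $\|\Pi r_N(t)\|_{L^2}^2$ and use this weak convergence, the asymptotic orthogonality of solitons with distinct speeds, $L^2$ conservation, \eqref{eq:Plancherel} and $\|\Pi R_{p_j^\infty}\|_{L^2}^2=2\pi|\lambda_j|$. One gets $\|\Pi r_N(t)\|_{L^2}^2=\sum_{j>N}2\pi|\lambda_j|+o(1)$ as $t\to\infty$.
\end{enumerate}

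Step 2 also needs repair.
\begin{itemize}
\item To get $\|\Pi u_0\|_{L^2}^2\le\sum_k2\pi|\lambda_k|$ in the limit, you need $\limsup_N\sum_{k\le N}|\lambda_k^N|\le\sum_k|\lambda_k|$. Resolvent convergence does not provide this, since eigenvalues of the approximants can collapse onto the threshold $0$ and take mass with them. The paper obtains $\lambda_k\le\lambda_k^N$ from $u_0\ge u_0^N$ pointwise and the min--max principle. The reverse inequality comes from Wu's identity (Corollary~\ref{cor:Wu}) and Bessel's inequality.
\item $L_{u_0}$ is not pure point, since its essential spectrum is $[0,\infty)$. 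What is true is that $\Pi u_0$ lies in the closed span of the $\varphi_k$.
\item Simplicity of the eigenvalues, needed for strictly increasing $\Im p_j^\infty$, comes from Wu's identity: an eigenvector orthogonal to $\Pi u_0$ would violate it. It does not follow from distinctness of the approximants' eigenvalues, which may merge in the limit.
\end{itemize}

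Finally, $\|R_p\|_{\dot H^s}^2=\pi\Gamma(2s+1)(2\Im p)^{-2s-1}$, not $\sim\Im(p)^{-2s}$. The correct exponent is what makes $\sum_j\|R_{p_j^\infty}\|_{\dot H^s}$ converge for all $s\ge1/2$; yours would only give it for $s\ge1$.
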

The above statement calls for several comments. Firstly,  the soliton resolution \eqref{eq:resol-sol} in $L^2$ has a different and slightly more complicated formulation compared to the soliton resolution 
\eqref{eq:resol-sol-bis} in $L^\infty$ and $\dot H^s$, because we do not know whether the function $u_{\mathrm{sol}}(t,\cdot )$ defined by \eqref{eq:usol} belongs to $L^2(\R)$. Secondly, Theorem \ref{thm:main} clearly defines a map $(p_j)_{j\geq 1}\mapsto (p_j^\infty)_{j\geq 1}$ from sequences of $\C_+$ satisfying \eqref{eq:norm} into a subset of sequences of $\C_+$ with strictly increasing imaginary parts. However, even if we know --- see section 5--- that
\[ \sum_{j=1}^\infty \frac{1}{\mathrm{Im}(p_j^\infty)}<\infty ,\]
 we do not know  the exact range of this map: in view of section 5, describing this range would be equivalent to solving an inverse spectral problem for 
 the Lax operators associated to potentials $u_0$ given by \eqref{eq:u0}, which we think is an interesting open problem. See also Remark \ref{rem:open} for a related inverse problem.
 
The  plan of the paper is as follows.
In Section~\ref{sec:notation}, we recall the Lax pair structure associated to the Benjamin--Ono equation, and an explicit formula for the Benjamin-Ono equation derived in~\cite{Gerard22}. In Section~\ref{sec:Lax}, we show that the eigenfunctions associated to the Lax operator $L_{u_0}$ belong to the domain of the  operator  $X^*$ occurring  in the explicit formula. In Section~\ref{sec:construction}, we show that the infinite-order multisoliton $u_0$ belongs to $H^s$ for every $s\geq 0$, and we derive an identity between the $L^2$ norm of $u_0$ and the eigenvalues of the Lax operator $L_{u_0}$. Finally, in Section~\ref{sec:resol-sol}, we prove Theorem~\ref{thm:main}.

\subsection*{Acknowledgements}
L. Gassot was supported by the France 2030 framework program, the Centre Henri Lebesgue ANR-11-LABX-0020-01, and the ANR project HEAD--ANR-24-CE40-3260.\\
P. G\'erard was partially supported by the French Agence Nationale de la Recherche under the ANR project ISAAC--ANR-23--CE40-0015-01.\\

\section{Explicit formula for the Benjamin-Ono equation}\label{sec:notation}

The proof of Theorem~\ref{thm:main} relies on an explicit formula derived by the second author in~\cite{Gerard22}, which we recall in this section. Let us denote by \[\langle f,g\rangle:=\int_{\R}f(x)\overline{g(x)}\dd x\] the inner product on $L^2(\R)$.

We denote by $L^2_+(\R)$ the Hardy space  of complex-valued functions $f\in L^2(\R,\C)$ whose Fourier transform is supported in the nonnegative half-line: $\mathrm{supp}\widehat{f}\subseteq [0,\infty)$.
In view of the inverse Fourier formula,
\[ f(z)=\frac{1}{2\pi}\int_0^\infty {\mathrm{e}}^{\ii z\xi}\widehat f(\xi)\dd\xi,\]
an element $f$ in $L^2_+(\R)$ extends as a holomorphic function on $\C_+$ satisfying
\[
\sup_{y>0}\int_{\R}|f(x+\ii y)|^2\dd x<\infty.
\]
Let $\Pi$ be the orthogonal projector from $L^2(\R)$ onto $L^2_+(\R)$. For $b\in L^{\infty}(\R)$, we denote by $T_b$ the Toeplitz operator on $L^2_+(\R)$ with symbol $b$, given by
\[
T_bf:=\Pi(bf),
\quad f\in L^2_+(\R).
\]
The Lax operator for~\eqref{BOinit} associated to $u_0$ is defined as 
\[
L_{u_0}:=-\ii\frac{\dd }{\dd x}-T_{u_0}.
\]
It is a self--adjoint operator on $L^2_+(\R)$, with the domain $H^1_+(\R):=L^2_+(\R)\cap H^1(\R)$. 
Next, we introduce the operator $X^*$ defined as the adjoint in $L^2_+(\R)$ of the multiplication by~$x$. Alternatively, one can define $X^*$ through the formula
\[
\widehat{X^*f}(\xi):=\ii\frac{\dd }{\dd \xi}\widehat{f}(\xi),
\quad \xi>0,
\]
and the domain
\[
\mathcal{D}(X^*)=\{f\in L^2(\R) \;\colon\; \widehat{f}|_{(0,\infty)}\in H^1(0,\infty)\},
\]
see for instance~\cite{Gerard26}.

Finally, for $f\in\mathcal{D}(X^*)$, the Fourier transform $\widehat{f}(\xi)$ admits a limit as $\xi\to 0^+$ that we denote by
\[
I_+(f):=\widehat{f}(0^+).
\]

The explicit formula for the Benjamin-Ono equation reads as follows.

\begin{theorem}[Explicit formula, see~\cite{Gerard22}]
Let $u_0\in H^1(\R)$,  then the solution $u\in\mathcal{C}(\R,H^1(\R))$ of~\eqref{BOinit} is given by $u(t,x)=2\Re(\Pi u(t,x))$, where the holomorphic extension of $\Pi u$ to the complex upper half-plane $\C_+$ satisfies:  for every $t\in\R$ and $z\in\C_+$,
\begin{equation}\label{eq:explicit}
\Pi u(t,z)=\frac{1}{2\ii\pi}I_+((X^*-2tL_{u_0}-z)^{-1}\Pi u_0).
\end{equation}
\end{theorem}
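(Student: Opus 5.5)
The plan is to derive \eqref{eq:explicit} from the Lax pair structure, combined with a commutator identity between $X^*$ and the Lax operator. I would first prove the formula for smooth, rapidly decaying data, say $u_0$ with $\widehat{u_0}$ smooth and $xu_0\in L^2$, so that every domain issue is harmless. I would then pass to general $u_0\in H^1(\R)$ by density, using the continuity of the flow in $H^1$. For fixed $z\in\C_+$, both sides of \eqref{eq:explicit} depend continuously on the data. On the right this uses the resolvent bound $\|(X^*-2tL_{u_0}-z)^{-1}\|\le 1/\Im z$, which holds because $X^*-2tL_{u_0}$ is maximal dissipative (as $\Im X^*\ge0$ in the appropriate sense), together with continuity of $I_+$ on the range of that resolvent.

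The first step is the case $t=0$. For $f\in L^2_+(\R)$, I solve $(X^*-z)g=f$ on the Fourier side: the equation is $\ii\widehat g'-z\widehat g=\widehat f$ on $(0,\infty)$, with $\widehat g\in H^1(0,\infty)$. The unique solution is
\[
\widehat g(\xi)=\ii\int_\xi^\infty \ee^{\ii z(\eta-\xi)}\widehat f(\eta)\dd\eta .
\]
Evaluating at $\xi=0$ gives $I_+((X^*-z)^{-1}f)=\ii\int_0^\infty \ee^{\ii z\eta}\widehat f(\eta)\dd\eta=2\ii\pi f(z)$. Applied to $f=\Pi u_0$, this is exactly \eqref{eq:explicit} at $t=0$.

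Next I bring in the dynamics. I would use the Lax pair $\frac{\dd}{\dd t}L_{u(t)}=[B_{u(t)},L_{u(t)}]$ with $B_u=\ii(T_{|D|u}-T_u^2)$. Let $U(t)$ be the unitary propagator solving $U'=B_uU$, $U(0)=I$, so that $L_{u(t)}=U(t)L_{u_0}U(t)^*$. A direct computation from \eqref{BOinit} gives
\[
\partial_t\Pi u=(B_u-\ii L_u^2)\Pi u .
\]
Hence, with $W(t):=U(t)\ee^{-\ii tL_{u_0}^2}$, we get $\Pi u(t)=W(t)\Pi u_0$.

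The key algebraic identity is
\[
[X^*,L_u]f=\ii f+\frac{\ii}{2\pi}I_+(f)\,\Pi u ,
\]
valid for $f$ in the relevant domain; it follows from $[X^*,T_u]f=\frac{\ii}{2\pi}I_+(f)\Pi u$ (up to sign conventions), which one checks in Fourier variables. From it I would compute $[X^*,B_u-\ii L_u^2]$. The terms proportional to $\Pi u$, together with the identity $I_+(B_uf)=I_+(\ii L_u^2f)$ up to matching terms, should combine into
\[
\frac{\dd}{\dd t}\Bigl(I_+\bigl(W(t)h\bigr)\Bigr)=0,
\qquad
W(t)^*X^*W(t)=X^*+2tL_{u_0}\ \text{on the relevant vectors}.
\]
Equivalently, $I_+(W(t)(X^*-2tL_{u_0}-z)^{-1}\Pi u_0)$ equals $I_+((X^*-z)^{-1}\Pi u(t))$, and combining this with the $t=0$ computation applied to $\Pi u(t)$ yields \eqref{eq:explicit}.

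The main obstacle is this last step: getting the commutator $[X^*,B_u-\ii L_u^2]$ and the invariance of $I_+$ exactly right, including the rank-one terms and signs. It must also be justified on the domain of $X^*$, since one has to check that $W(t)$ preserves $\mathcal D(X^*)$ for nice data. I would first verify the commutator identities on Schwartz functions with $\widehat f$ smooth up to $0^+$. I would then derive the time-derivative identity in weak form, by pairing against $(X^*-\bar z)^{-1}$-type test vectors, which avoids differentiating unbounded operators directly.
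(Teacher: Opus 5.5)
Your architecture is the one used in \cite{Gerard22}, which the paper quotes without reproving: Lax pair, $\Pi u(t)=W(t)\Pi u_0$, conjugation of $X^*$ by $W(t)$, invariance of $I_+$, and the $t=0$ identity $I_+((X^*-z)^{-1}f)=2\ii\pi f(z)$, which you derive correctly. The genuine gap is that the identities you assert at the decisive step have wrong signs, and with your signs that step fails.

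With the conventions of \eqref{BOinit}, $D=-\ii\partial_x$, $L_u=D-T_u$ and $B_u=\ii(T_{|D|u}-T_u^2)$, a direct computation gives $\partial_t\Pi u=(B_u+\ii L_u^2)\Pi u$. (Write $u=\Pi u+\overline{\Pi u}$ and use $|D|\overline{\Pi u}=-D\overline{\Pi u}$.) So $W(t)=U(t)\ee^{\ii tL_{u_0}^2}$, not $U(t)\ee^{-\ii tL_{u_0}^2}$.

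Your $[X^*,T_u]f=\frac{\ii}{2\pi}I_+(f)\Pi u$ is right, but since $L_u=D-T_u$ it gives $[X^*,L_u]f=\ii f-\frac{\ii}{2\pi}I_+(f)\Pi u$; compare \eqref{eq:Xstarphi}. Since $I_+(Df)=0$, we have $I_+(L_uf)=-I_+(T_uf)$. Then all rank-one terms cancel and $[X^*,B_u+\ii L_u^2]=-2L_u$. Hence $\frac{\dd}{\dd t}(W^*X^*W)=-2W^*L_uW=-2L_{u_0}$, and so $W(t)^*X^*W(t)=X^*-2tL_{u_0}$. Your version $X^*+2tL_{u_0}$ would give the formula with $+2tL_{u_0}$, which contradicts your own next line.

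Likewise $I_+((B_u+\ii L_u^2)f)=\ii\int(|D|u+Du)f\dd x=2\ii\int(D\Pi u)f\dd x=0$, because both factors lie in $L^2_+$. Your generator $B_u-\ii L_u^2$ differs from this by $2\ii L_u^2$, and with it both identities break. Uncancelled rank-one terms remain in the commutator. Also $I_+$ is not conserved: $I_+((B_u-\ii L_u^2)\varphi)=-2\ii\lambda^2I_+(\varphi)\neq0$ for any Lax eigenfunction $\varphi$.

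The one-soliton case fixes every sign at once. For $u_0=R_p$, $\Pi u_0$ is a joint eigenvector of $X^*$ (eigenvalue $-p$) and of $L_{u_0}$ (eigenvalue $-1/(2\Im(p))$). Only $-2tL_{u_0}$ then returns $\Pi R_p(z-c_pt)$.

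There are two lesser points. First, $\Im\langle X^*h,h\rangle=-|I_+(h)|^2/(4\pi)\le0$, not $\ge0$; this sign is what gives $\|(X^*-2tL_{u_0}-z)^{-1}\|\le1/\Im(z)$ for $z\in\C_+$. Dissipativity alone does not give surjectivity of $X^*-2tL_{u_0}-z$, since this is a sum of unbounded operators. In this approach surjectivity comes most naturally from $X^*-2tL_{u_0}=W(t)^*X^*W(t)$, once you know that $W(t)$ preserves $\mathcal D(X^*)$.

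Second, $I_+$ is unbounded on $L^2_+$, so ``continuity of $I_+$ on the range of the resolvent'' is not available as stated. A workable replacement comes from your $t=0$ computation, which gives $I_+(h)=2\ii\pi\,((X^*-z)h)(z)$ for $h\in\mathcal D(X^*)$. For $h=(X^*-2tL_{u_0}-z)^{-1}f$ this becomes $I_+(h)=2\ii\pi\big(f(z)+2t\langle h,L_{u_0}K_z\rangle\big)$, where $K_z(x)=\frac{\ii}{2\pi(x-\bar z)}$ is the reproducing kernel of $L^2_+$. This expression is continuous under weak convergence of $h$ together with $L^2$-convergence of $u_0$. What then remains to prove is weak convergence of the resolvents as $u_0^n\to u_0$.
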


\section{Eigenfunctions of the Lax operator}\label{sec:Lax}

In this section, we prove a general property of the Lax eigenfunctions associated to a potential in $L^2(\R)$, which may be of independent interest.

\begin{proposition}[Lax eigenfunctions belong to the domain of $X^*$]\label{prop:phi-domain}
Let $u_0\in L^2(\R)$, and let $\varphi$ be an eigenfunction of $L_{u_0}$ associated to a negative eigenvalue $\lambda<0$. Then $\varphi\in\mathcal{D}(X^*)$.
Moreover $X^*\varphi \in \mathcal {D}(L_{u_0})$ and 
\begin{equation}\label{eq:Xstarphi}
(L_{u_0}-\lambda)(X^*\varphi )=-\ii\varphi +\frac{\ii}{2\pi |\lambda|}\langle \varphi ,\Pi u_0\rangle \Pi u_0.
\end{equation}
\end{proposition}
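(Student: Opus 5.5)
Our plan is to work on the Fourier side, where the eigenvalue equation becomes an ODE-free identity for $\widehat\varphi$, and to use the commutator $[X^*,L_{u_0}]$. We start from $L_{u_0}\varphi=\lambda\varphi$, i.e.
\[
-\ii\varphi'-\lambda\varphi=\Pi(u_0\varphi),
\]
and take Fourier transforms on $(0,\infty)$:
\[
(\xi+|\lambda|)\widehat\varphi(\xi)=\widehat{u_0\varphi}(\xi),\qquad \xi>0.
\]
Since $u_0\in L^2$ and $\varphi\in H^1_+\subset L^2\cap L^\infty$, the product $u_0\varphi$ lies in $L^2$, so $\widehat{u_0\varphi}\in L^2$. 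Moreover $u_0\varphi\in L^1$, hence $\widehat{u_0\varphi}$ is bounded and continuous.

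The first step is to show $\widehat\varphi\in H^1(0,\infty)$. We write $\widehat\varphi(\xi)=(\xi+|\lambda|)^{-1}\widehat{u_0\varphi}(\xi)$; since $|\lambda|>0$ the factor is smooth and bounded with bounded derivative. It therefore suffices to control the derivative $\frac{\dd}{\dd\xi}\widehat{u_0\varphi}=-\ii\widehat{x u_0\varphi}$, weighted by $(\xi+|\lambda|)^{-1}$. This is the main obstacle, since $xu_0$ need not be in $L^2$. We would handle it through the convolution structure: $\widehat{u_0\varphi}=\frac{1}{2\pi}\widehat u_0*\widehat\varphi$, with $\widehat\varphi$ supported in $[0,\infty)$.

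An alternative, cleaner route is a bootstrap. Multiplying by $x$ gives $x\varphi=x\Pi(u_0\varphi)/(\dots)$, which is not directly available. Instead we expect the following argument to work. Set $\psi:=x\varphi$ as a distribution and use $\varphi=(L_0-\lambda)^{-1}\Pi(u_0\varphi)$ with $L_0=-\ii\partial_x$. The kernel of $(L_0-\lambda)^{-1}$ on $L^2_+$ decays exponentially at rate $|\lambda|$ (on the Fourier side it is multiplication by $(\xi+|\lambda|)^{-1}$, analytic in a strip). Hence $X^*$ commutes with it up to $[X^*,(L_0-\lambda)^{-1}]=\ii(L_0-\lambda)^{-2}$-type terms, which are bounded.

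The remaining issue is $X^*\Pi(u_0\varphi)$. Here we use the known identity $X^*T_{u_0}f=T_{u_0}X^*f+\frac{\ii}{2\pi}I_+(f)\Pi u_0$ for $f\in\mathcal D(X^*)$, which comes from $\widehat{u_0f}$ differentiated with the boundary term at $\xi=0$. We make this rigorous by regularization: replace $X^*$ by the bounded operators $X^*_\varepsilon$ with symbol $\ii\,\frac{\dd}{\dd\xi}$ composed with the cutoff $e^{-\varepsilon x^2}$ (or by difference quotients $\frac{e^{-\ii\varepsilon x}-1}{\varepsilon}$ acting on $L^2_+$, namely translation in $\xi$). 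We then obtain a uniform bound $\|X^*_\varepsilon\varphi\|\le C$ from the resolvent equation, because $(L_{u_0}-\lambda)$ is invertible on $\varphi^\perp$ with bounded inverse, and $\lambda$ is an isolated eigenvalue since the essential spectrum is $[0,\infty)$. Passing to the limit gives $\varphi\in\mathcal D(X^*)$.

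Finally, for the identity we apply $X^*$ to $(L_{u_0}-\lambda)\varphi=0$ using the commutator relations $[X^*,-\ii\partial_x]=-\ii$ (from $\ii\partial_\xi(\xi\widehat f)=\ii\widehat f+\xi\,\ii\partial_\xi\widehat f$), together with the $T_{u_0}$ relation above. This yields
\[
(L_{u_0}-\lambda)X^*\varphi=-\ii\varphi+\frac{\ii}{2\pi}I_+(\varphi)\Pi u_0 .
\]
We then evaluate $I_+(\varphi)=\widehat\varphi(0^+)=\widehat{u_0\varphi}(0)/|\lambda|=\frac{1}{|\lambda|}\int u_0\varphi=\frac{1}{|\lambda|}\langle\varphi,\Pi u_0\rangle$, which needs checking with conjugation conventions and uses that $u_0$ is real. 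This gives \eqref{eq:Xstarphi}, and the membership $X^*\varphi\in\mathcal D(L_{u_0})$ follows since the right side lies in $L^2_+$.
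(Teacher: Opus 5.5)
Your final route differs from the paper's. You regularize $X^*$ by a one-sided difference quotient in $\xi$, bound it uniformly using that $\lambda$ is isolated, and pass to the limit. The paper instead splits $u_0=u_{0,R}+(u_0-u_{0,R})$, so that $\mathrm{Id}-A_{u_0-u_{0,R}}$ is invertible by smallness while $xu_{0,R}\in L^2$ handles the rest; this gives strong convergence of $\tau_\eta\widehat\varphi$. Your route can be made to work, but the sentence ``we obtain a uniform bound $\|X^*_\varepsilon\varphi\|\le C$ from the resolvent equation'' hides two gaps.

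First, you never actually bound $(L_{u_0}-\lambda)X^*_\varepsilon\varphi$, and this is exactly where $xu_0\notin L^2$ matters. The identity $X^*T_{u_0}f=T_{u_0}X^*f+\frac{\ii}{2\pi}I_+(f)\Pi u_0$ you invoke presupposes $\varphi\in\mathcal D(X^*)$, which is what you are proving. What is needed is the regularized commutator. Set $\tau_\varepsilon\psi(\xi)=\varepsilon^{-1}(\psi(\xi+\varepsilon)-\psi(\xi))$ and $\widehat{X^*_\varepsilon f}:=\ii\tau_\varepsilon\widehat f$ on $(0,\infty)$. Applying $\tau_\varepsilon$ to \eqref{eq:phi-fourier} shows that, for $\xi>0$, the Fourier transform of $(L_{u_0}-\lambda)X^*_\varepsilon\varphi$ is
\[
-\ii\widehat\varphi(\xi+\varepsilon)+\frac{\ii}{2\pi\varepsilon}\int_0^\varepsilon\widehat u_0(\xi+\varepsilon-\zeta)\,\widehat\varphi(\zeta)\dd\zeta .
\]
Its $L^2(0,\infty)$ norm is at most $\|\widehat\varphi\|_{L^2}+\frac{1}{2\pi}\|\widehat u_0\|_{L^2}\|\widehat\varphi\|_{L^\infty}$, and it converges in $L^2$ to the Fourier transform of the right-hand side of \eqref{eq:Xstarphi} (compare \eqref{eq:tauA}). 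Only the boundedness of $\widehat\varphi$ and its continuity at $0^+$ enter, so no truncation of $u_0$ is needed. This does use that the translation never crosses $\xi=0$. The Gaussian cutoff you list first, $f\mapsto\Pi(x\ee^{-\varepsilon x^2}f)$, would not do: applied to $\varphi$ it has norm of order $\varepsilon^{-1/4}$ whenever $I_+(\varphi)\neq0$, because the smoothing straddles the jump of $\widehat\varphi$ at $0$.

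Second, invertibility of $L_{u_0}-\lambda$ on $\varphi^\perp$ only bounds $w_\varepsilon:=X^*_\varepsilon\varphi-a_\varepsilon\varphi$. It does not bound the coefficient
\[
a_\varepsilon=\langle X^*_\varepsilon\varphi,\varphi\rangle=\ii\int_\R\frac{\ee^{-\ii\varepsilon x}-1}{\varepsilon}\,|\varphi(x)|^2\dd x ,
\]
which is a regularized first moment of $|\varphi|^2$ (for normalized $\varphi$ its limit is $-p^\infty$, cf.\ Lemma~\ref{lem:pointwise}). Nothing in your argument controls it, and the naive bound $\int_\R|x||\varphi|^2\dd x$ is in fact infinite, since $\widehat\varphi$ jumps at $0$ ($I_+(\varphi)\neq0$ by Corollary~\ref{cor:Wu}).

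A fix: pick $\chi\in C_c^\infty(0,\infty)$ with $\langle\widehat\varphi,\chi\rangle\neq0$. For small $\varepsilon$, $|\langle\tau_\varepsilon\widehat\varphi,\chi\rangle|\le\|\widehat\varphi\|_{L^2}\|\chi'\|_{L^2}$, and since $w_\varepsilon$ is bounded in $L^2$ this bounds $a_\varepsilon$. Relatedly, you should not assume $\lambda$ is simple: for general $u_0\in L^2$ that follows from Corollary~\ref{cor:Wu}, hence from the present proposition. Work instead on the complement of the finite-dimensional eigenspace and bound each coefficient the same way.

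Finally, $X^*\varphi\in\mathcal D(L_{u_0})$ does not follow merely from the right-hand side lying in $L^2_+$. It follows from $X^*_\varepsilon\varphi\rightharpoonup X^*\varphi$, the strong convergence above, and the weak closedness of the graph of $L_{u_0}$. With these repairs you get a valid proof that replaces the paper's smallness of the tail $u_0-u_{0,R}$ by the spectral gap at $\lambda$ plus weak compactness.

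Two side claims are wrong but unused. The kernel of $(L_0-\lambda)^{-1}$ on $L^2_+$ decays like $1/x$, not exponentially, because $(\xi+|\lambda|)^{-1}\mathbf{1}_{\xi>0}$ jumps at $\xi=0$. And $[X^*,-\ii\partial_x]=+\ii$, not $-\ii$.
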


\begin{proof}
Taking the Fourier transform of the equality $L_{u_0}\varphi=\lambda\varphi$, we find
\begin{equation}\label{eq:phi-fourier}
\forall \xi>0,\quad \widehat{\varphi}(\xi)=\frac{\widehat{u_0\varphi}(\xi)}{(\xi+|\lambda|)}.
\end{equation}
Since $u_0\varphi \in L^1$, it is straightforward to check that the right-hand side of \eqref{eq:phi-fourier} belongs to $L^2(0,\infty)$ and is continuous on $[0,\infty)$, so that
\begin{equation}\label{I+phi}
I_+(\varphi )=\widehat \varphi (0^+)=\frac{\widehat{u_0\varphi}(0)}{|\lambda|}=\frac{\langle \varphi, \Pi u_0\rangle }{|\lambda|}.
\end{equation}

In order to show that the right-hand side of \eqref{eq:phi-fourier} belongs to $H^1(0,\infty)$, we will show that the growth rate $\tau_\eta\widehat{\varphi}$, where $\tau_\eta$ is defined for $\eta>0$ as
\[
\tau_\eta \psi:=\frac{\psi(\xi+\eta)-\psi(\xi)}{\eta},
\quad \psi\in L^2(0,\infty),
\]
admits a limit as $\eta\to 0$ in $L^2(0,\infty)$.

 First notice that the right hand side of \eqref{eq:phi-fourier} also reads $A_{u_0}(\widehat \varphi) (\xi )$, where we have set
 \[ A_v \psi (\xi )=\frac{1}{2\pi (\xi +|\lambda |)}\int_0^\infty \widehat v(\xi -\zeta)\psi (\zeta)\dd\zeta ,\]
 for $v\in L^2(\R), \psi \in L^2(0,\infty )$.
Similarly to what we already observed, $A_v $ is a bounded operator from $L^2(0,\infty )$ into itself, with a norm bounded by $C\Vert v\Vert_{L^2}$, for some constant $C$ depending only on $\lambda $.

Let us fix some parameter $R>0$ and introduce the truncated function  $u_{0,R}$ defined as
\begin{equation}\label{eq:u0R}
u_{0,R}(x)=\mathbf{1}_{|x|<R}u_0(x),
\quad
\forall x\in\R.
\end{equation}
We decompose the eigenvalue equation~\eqref{eq:phi-fourier} as
\begin{equation}\label{eq:phi-fourierbis}
\forall \xi>0, \quad \widehat{\varphi}(\xi)-A_{u_0-u_{0,R}}(\widehat \varphi) (\xi)=A_{u_{0,R}}(\widehat \varphi )(\xi).
\end{equation}
Since the norm of the operator $A_{u_0-u_{0,R}}$ tends to $0$ as $R\to \infty$, we can select $R>0$ so that $\mathrm{Id}-A_{u_0-u_{0,R}}$ is invertible as an operator of $L^2(0,\infty )$. Next we identify the commutator of $\tau_\eta $ with $A_v$. A simple calculation leads to
\begin{equation}\label{eq:tauA}
[\tau_\eta, A_v]\psi (\xi)=-\frac{A_v\psi (\xi +\eta)}{\xi +|\lambda|}+\frac{1}{2\pi(\xi +|\lambda|)}\int_0^\eta \widehat v(\xi +\eta -\zeta)\psi (\zeta)\, \frac{\dd\zeta}{\eta}.
\end{equation}
If $\psi $ is continuous on $[0,\infty )$, we conclude that $[\tau_\eta ,A_v]\psi $ has a limit $\Lambda _v\psi $ in $L^2(0,\infty )$ as $\eta \to 0^+$, given by
\begin{equation}\label{eq:limit}
 \Lambda_v\psi (\xi )=-\frac{A_v\psi (\xi)}{\xi +|\lambda|}+\frac{\widehat v(\xi )\psi (0^+)}{2\pi(\xi +|\lambda|)}. 
 \end{equation}
 Coming back to \eqref{eq:phi-fourierbis}, we have
 \[
\tau_\eta\widehat{\varphi}-A_{u_0-u_{0,R}}(\tau_\eta \widehat \varphi) =\tau_\eta A_{u_{0,R}}(\widehat \varphi )+[\tau_\eta, A_{u_0-u_{0,R}}](\widehat \varphi ),\]
 and, since $xu_{0,R}\in L^2$ and $\widehat \varphi $  is continuous on $[0,\infty )$, the right hand side has a limit in $L^2(0,\infty )$ as $\eta \to 0^+$, given by
 \[-\frac{1}{\xi +|\lambda|}A_{u_0,R}(\widehat \varphi)-\ii A_{xu_{0,R}}(\widehat \varphi )+ \Lambda_{u_0-u_{0,R}}(\widehat \varphi ).\]
 Consequently,  the growth rate satisfies, as $\eta \to 0^+$, 
\[ \tau_\eta \widehat \varphi \longrightarrow (\mathrm{Id}-A_{u_0-u_{0,R}})^{-1}\left (-\frac{1}{\xi +|\lambda|}A_{u_0,R}(\widehat \varphi)-\ii A_{xu_{0,R}}(\widehat \varphi )+ \Lambda_{u_0-u_{0,R}}(\widehat \varphi )\right ).\]
This completes the proof of the first statement. For the second statement, let us apply $\tau_\eta $ directly to \eqref{eq:phi-fourier},
\[ \tau_\eta \widehat \varphi =A_{u_0}\tau_\eta \widehat \varphi +[\tau_\eta ,A_{u_0}]\widehat \varphi .\]
Taking the limit as $\eta \to 0^+$, we obtain, in view of \eqref{eq:limit} and~\eqref{eq:phi-fourier}, 
\[ \quad  \widehat \varphi '(\xi )=A_{u_0}( \widehat \varphi ')(\xi )+\Lambda_{u_0}(\widehat \varphi )(\xi)=A_{u_0}( \widehat \varphi ')(\xi )-(\xi +|\lambda|)^{-1}\widehat \varphi (\xi)+(2\pi(\xi +|\lambda|))^{-1}\widehat u_0(\xi )I_+(\varphi ).\]
This equation exactly means
\[ (L_{u_0}-\lambda)(X^*\varphi )=-\ii \varphi +\frac{\ii}{2\pi}I_+(\varphi )\Pi u_0,\]
which, in view of \eqref{I+phi}, is  \eqref{eq:Xstarphi}.
\end{proof}
Using formula \eqref{eq:Xstarphi}, one easily recovers Wu's identity ~\cite[Lemma 2.5]{Wu16a}, see also ~\cite{BadreddineKV25}. 
\begin{corollary}\label{cor:Wu}
Let $u_0\in L^2(\R)$, and let $\varphi$ be an eigenfunction of $L_{u_0}$ associated to a negative eigenvalue $\lambda<0$. Then
\[ |\langle \Pi u_0,\varphi \rangle |^2=2\pi |\lambda |\Vert \varphi \Vert ^2.\]
\end{corollary}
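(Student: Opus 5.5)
We will take the inner product of identity \eqref{eq:Xstarphi} with $\varphi$ and use self-adjointness of $L_{u_0}$ to kill the left-hand side. By Proposition~\ref{prop:phi-domain}, $X^*\varphi\in\mathcal{D}(L_{u_0})$ and $\varphi\in\mathcal D(L_{u_0})$ with $(L_{u_0}-\lambda)\varphi=0$. Since $L_{u_0}$ is self-adjoint on $L^2_+(\R)$ and $\lambda$ is real, we get
\[
\langle (L_{u_0}-\lambda)(X^*\varphi),\varphi\rangle=\langle X^*\varphi,(L_{u_0}-\lambda)\varphi\rangle=0 .
\]

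On the other hand, pairing the right-hand side of \eqref{eq:Xstarphi} with $\varphi$ gives
\[
0=-\ii\|\varphi\|^2+\frac{\ii}{2\pi|\lambda|}\langle\varphi,\Pi u_0\rangle\langle \Pi u_0,\varphi\rangle
=-\ii\|\varphi\|^2+\frac{\ii}{2\pi|\lambda|}|\langle \Pi u_0,\varphi\rangle|^2 .
\]
Multiplying by $-\ii\,2\pi|\lambda|$ yields $|\langle \Pi u_0,\varphi\rangle|^2=2\pi|\lambda|\|\varphi\|^2$, which is the claim.

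The only point requiring care is the justification of moving $L_{u_0}-\lambda$ across the inner product. This requires both vectors to lie in the domain $H^1_+(\R)$. For $\varphi$ this is immediate since it is an eigenfunction. For $X^*\varphi$ it is exactly the second assertion of Proposition~\ref{prop:phi-domain}, so no further obstacle remains. We also note that $\Pi u_0\in L^2_+$ because $u_0\in L^2$, so every pairing above is well defined.
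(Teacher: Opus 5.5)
Your proof is correct and is exactly the paper's argument: pair \eqref{eq:Xstarphi} with $\varphi$, observe that the left-hand side vanishes by self-adjointness of $L_{u_0}$ (legitimate because $X^*\varphi\in\mathcal{D}(L_{u_0})$ by Proposition~\ref{prop:phi-domain}), and solve for $|\langle \Pi u_0,\varphi\rangle|^2$.
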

\begin{proof}
Take the inner product with $\varphi $ of both sides of \eqref{eq:Xstarphi}.
\end{proof}

\section{Construction of the infinite-order multisoliton}\label{sec:construction}

In this section, we justify that the initial data defined in~\eqref{eq:u0} belongs to $H^s$ for $s\geq 0$, and that it admits an infinite number of negative Lax eigenvalues.

\begin{lemma}[Finite $H^s$ norm for $s\geq 0$]\label{lem:L2}
The function $u_0$ given by~\eqref{eq:u0} belongs to $L^2(\R)$ if and only if the sum~\eqref{eq:norm} is finite.
In this case,  $u_0$ belongs to  $H^s(\R)$ for every $s\geq 0$.
\end{lemma}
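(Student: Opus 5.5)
Write $R_p(x)=\frac{2\Im(p)}{|x+p|^2}$ as $R_p=2\Re(g_p)$ with $g_p(x)=\frac{\ii}{x+\overline{p}}$. Since $-\overline{p}$ lies in $\C_-$, $g_p$ is holomorphic on $\C_+$, so $g_p=\Pi R_p\in L^2_+(\R)$. Every term of \eqref{eq:u0} is positive, so the series converges pointwise in $[0,\infty]$ and monotone convergence applies to the $L^2$ norm:
\[
\|u_0\|_{L^2}^2=\sum_{j,k}\langle R_{p_j},R_{p_k}\rangle .
\]
I would compute the cross terms $\langle R_p,R_q\rangle$ explicitly by residues. Use $\frac{2\Im p}{|x+p|^2}=\frac{1}{\ii}\Big(\frac{1}{x+\overline p}-\frac{1}{x+p}\Big)$ and close the contour in the upper or lower half-plane. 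This should give
\[
\langle R_p,R_q\rangle=\frac{8\pi(\Im p+\Im q)}{|p-\overline q|^2}.
\]
A check: for $p=q$ we get $8\pi\cdot 2\Im p/(4(\Im p)^2)=4\pi/\Im p=4\pi c_p$, which matches $\|R_p\|^2$.

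Hence $\|u_0\|_{L^2}^2=8\pi\sum_{j,k}\frac{\Im p_j+\Im p_k}{|p_j-\overline{p_k}|^2}=16\pi\sum_{j,k}\frac{\Im p_j}{|p_j-\overline{p_k}|^2}$, using the symmetry $|p_j-\overline{p_k}|=|p_k-\overline{p_j}|$. Since all terms are nonnegative, $u_0\in L^2$ if and only if \eqref{eq:norm} holds.

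For $H^s$ I would work on the Fourier side. We have $\widehat{g_p}(\xi)=2\pi\,\mathbf 1_{\xi>0}\,\ee^{\ii\overline p\xi}$ up to a constant and sign convention, and $\widehat{R_p}(\xi)=2\pi\,\ee^{-\Im(p)|\xi|}\ee^{\pm\ii\Re(p)\xi}$. The partial sums $u_0^N$ converge to $u_0$ in $L^2$: since $u_0-u_0^N\ge0$ is the tail, its norm squared equals the tail of the double series. Therefore $\widehat{u_0}=\lim\widehat{u_0^N}$ in $L^2$, and $\widehat{u_0}(\xi)=2\pi\sum_j \ee^{-\Im(p_j)|\xi|}\ee^{\ii\Re(p_j)\xi}$ in $L^2$.

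The main obstacle is the weight $|\xi|^{2s}$, because the $L^2$ condition alone does not obviously control the high frequencies. I would expand
\[
\int|\xi|^{2s}|\widehat{u_0^N}(\xi)|^2\,\dd\xi=4\pi^2\sum_{j,k\le N}\int|\xi|^{2s}\ee^{-(\Im p_j+\Im p_k)|\xi|}\ee^{\ii(\Re p_j-\Re p_k)\xi}\,\dd\xi .
\]
The oscillating terms do not have a sign, so I would avoid them. Instead I would use the \emph{pointwise} bound $|\widehat{u_0}(\xi)|\le 2\pi\sum_j\ee^{-\Im(p_j)|\xi|}=\widehat{v}(\xi)$, where $v=\sum_j R_{\ii\Im p_j}$. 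The diagonal terms of \eqref{eq:norm} give $\sum_j 1/\Im p_j<\infty$, so $\Im p_j\to\infty$ and $\delta:=\inf_j\Im p_j>0$. Then
\[
\sum_j\ee^{-\Im(p_j)|\xi|}\le \ee^{-\delta|\xi|/2}\sum_j\ee^{-\Im(p_j)|\xi|/2}\le \ee^{-\delta|\xi|/2}\sum_j\frac{C}{\Im(p_j)|\xi|}
\]
for $|\xi|\ge1$, which decays exponentially. So $|\xi|^{s}\widehat{u_0}\in L^2(|\xi|\ge1)$ for every $s$. On $|\xi|\le1$ we already have $\widehat{u_0}\in L^2$. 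Together these give $u_0\in H^s$ for all $s\ge0$.

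It remains to justify that the pointwise series for $\widehat{u_0}$ coincides with the $L^2$ limit. This follows because each series converges absolutely for $\xi\neq0$, and an $L^2$ limit agrees a.e.\ with the pointwise limit along a subsequence.
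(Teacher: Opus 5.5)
Your proof is correct. The $L^2$ part is the paper's residue computation; the $H^s$ part follows a different route.

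For $L^2$, the paper computes $\langle \Pi R_{p_j},\Pi R_{p_k}\rangle=2\ii\pi/(p_j-\overline{p_k})$ and runs a Cauchy criterion on $\Pi u_0^N$. You instead apply Tonelli to the real functions, which writes $\|u_0\|_{L^2}^2$ directly as a nonnegative double series in $[0,\infty]$. That settles both directions of the equivalence at once, whereas the paper leaves the ``only if'' direction implicit.

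For $H^s$, the paper computes $\|R_p\|_{\dot H^s}^2=C_s(\Im p)^{-2s-1}$. It then uses $\Im(p_j)\geq c_0>0$ and $\sum_j 1/\Im(p_j)<\infty$ to get normal convergence of the series in $\dot H^s$ for $s\geq 1/2$. You use the same two inputs but discard the phases, via the majorant $|\widehat{u_0}(\xi)|\leq 2\pi\sum_j\ee^{-\Im(p_j)|\xi|}$. This majorant decays exponentially for $|\xi|\geq 1$, and you handle low frequencies with $\widehat{u_0}\in L^2$.

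Your version gives more than needed, namely exponential decay of $\widehat{u_0}$ at high frequencies. It also makes explicit the identification of the $L^2$ Fourier transform with the pointwise series, which the paper leaves implicit. The paper's per-soliton formula \eqref{eq:RpdotHs} has a different advantage: it is reused in Section~\ref{sec:resol-sol} to sum the tails of $u_{\mathrm{sol}}$ in $\dot H^s$, where no $L^2$ control on the sum is available.

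There are two slips, both immaterial to the argument. First, since $-\overline p\in\C_+$, the holomorphic part is $\Pi R_p=\ii/(x+p)$; your $\ii/(x+\overline p)$ has real part $-R_p/2$. Second, $\langle R_p,R_q\rangle=2\Re\frac{2\ii\pi}{p-\overline q}=\frac{4\pi(\Im p+\Im q)}{|p-\overline q|^2}$, so your $8\pi$ should be $4\pi$. This is consistent with $\|R_p\|_{L^2}^2=2\|\Pi R_p\|_{L^2}^2=2\pi/\Im(p)$.
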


\begin{proof}
We define the partial sums
\begin{equation}\label{eq:u0N}
u_0^N(x):=\sum_{j=1}^N \frac{2\Im(p_j)}{|x+p_j|^2}.
\end{equation}
It is enough to show that the sequence $(\Pi u_0^N)_{N\geq 1}$ is a Cauchy sequence in $L^2(\R)$. Note that for every $x\in\R$, we have
\[
\Pi u_0^N(x)
	=\sum_{j=1}^N\frac{\ii}{x+p_j}.
\]
Let $1\leq M<N$. Applying the residue formula, we find
\begin{equation*}
\|\Pi u_0^N-\Pi u_0^M\|_{L^2}^2
	=\sum_{j,k=M+1}^{N}\int_{\R}\frac{\dd x}{(x+p_j)(x+\overline{p_k})}
	=\sum_{j,k=M+1}^{N} \frac{2\ii \pi}{p_j-\overline{p_k}}.
\end{equation*}

Comparing the indices $j$ and $k$, we conclude
\begin{align}\label{eq:Cauchy}
\|\Pi u_0^N-\Pi u_0^M\|_{L^2}^2
	&=\sum_{j=M+1}^N \frac{\pi}{\Im(p_j)}
	+\sum_{M<j<k\leq N}\frac{4\pi(\Im(p_j)+\Im(p_k))}{|p_j-\overline{p_k}|^2}\\ \nonumber
	&=\sum_{j,k=M+1}^N \frac{4\pi\Im(p_j)}{|p_j-\overline{p_k}|^2}.
\end{align}
This implies that $(\Pi u_0^N)_N$ is a Cauchy sequence in $L^2(\R)$ if and only if the positive series~\eqref{eq:norm} is convergent. In particular, note that thanks to~\eqref{eq:Cauchy}, we have
\begin{equation}\label{eq:finite-sum}
\sum_{j=1}^\infty \frac{1}{\Im (p_j)}<+\infty.
\end{equation}
This implies that the sequence $(\Im(p_j))_{j\geq 1}$ should go to infinity, hence it is bounded from below by some positive number $c_0>0$.

We finally fix $s\in (0,\infty)$ and we calculate 
\[ \Vert R_p\Vert_{\dot H^s}^2=\frac 12 \Vert \Pi R_p\Vert_{\dot H^s}^2=\frac{1}{2\pi}\int_0^\infty \xi ^{2s}|\widehat{\Pi R_p}(\xi)|^2\dd\xi ,\]
where we recall that $R_p$ is given by \eqref{eq:Rp}. 
Since 
\[ \widehat{\Pi R_p}(\xi )=2\pi {\mathrm{e}}^{\ii p\xi},
\quad \xi >0,
\]
we infer
\begin{equation}\label{eq:RpdotHs}
\Vert R_p\Vert_{\dot H^s}^2 =\pi \int_0^\infty \xi ^{2s}{\mathrm{e}}^{-2\Im(p)\xi}\dd\xi =\frac{\pi \Gamma (2s+1)}{(2\Im (p))^{2s+1}}.
\end{equation}
Using~\eqref{eq:finite-sum}, we deduce that the series $\| R_{p_j}\|_{\dot H^s}$ is convergent for every $s\geq 1/2$, and therefore $u_0\in \dot H^s(\R)$ for $s\geq 1/2$. Since $u_0\in L^2(\R)$, this completes the proof.
\end{proof}

\begin{remark}
Note that $u_0$ does not belong to $L^1(\R)$, and in particular $xu_0\not\in L^2(\R)$. Indeed, we have
\[
\|u_0^N\|_{L^1}
	=\sum_{j=1}^N \int_{\R}\frac{2\Im(p_j)}{|x+p_j|^2}\dd x=2N\pi,
\] 
hence the sequence $(u_0^N)_{N\geq 1}$ does not converge in $L^1(\R)$.
\end{remark}

\begin{proposition}[Infinite number of negative Lax eigenvalues]
Consider the initial data $u_0$ given by~\eqref{eq:u0}. The Lax operator $L_{u_0}$ admits an infinite number of negative eigenvalues $(\lambda_k)_{k\geq 1}$ with $\lambda_1<\lambda_2<\dots<0$. Moreover, there holds the distorted Plancherel formula
\begin{equation}\label{eq:Plancherel}
\sum_{k= 1}^{\infty}2\pi|\lambda_k|=\|\Pi u_0\|_{L^2}^2.
\end{equation}
\end{proposition}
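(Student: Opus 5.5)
We prove the statement by approximating $u_0$ with the finite multisolitons $u_0^N$ of \eqref{eq:u0N}. For these, the Lax spectrum is explicit. It is known (see \cite{Sun2020}, \cite{GassotGM26}) that $L_{u_0^N}$ has exactly $N$ negative eigenvalues. Moreover, $\Pi u_0^N$ lies in the span of the corresponding eigenfunctions: this span is the $N$-dimensional space of rational functions $\sum_j a_j/(x+p_j)$, which is invariant under $L_{u_0^N}$, and the continuous spectrum $[0,\infty)$ carries no part of $\Pi u_0^N$. Expanding $\Pi u_0^N$ in an orthonormal eigenbasis $(\varphi_k^N)$ and applying Corollary~\ref{cor:Wu} gives
\[
\|\Pi u_0^N\|_{L^2}^2=\sum_{k=1}^N|\langle \Pi u_0^N,\varphi_k^N\rangle|^2=\sum_{k=1}^N 2\pi|\lambda_k^N|.
\]
This is \eqref{eq:Plancherel} at finite $N$. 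The plan is to pass to the limit $N\to\infty$.

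For $u_0$ itself, let $(\varphi_k)$ be an orthonormal family of eigenfunctions for the negative eigenvalues of $L_{u_0}$. Bessel's inequality and Corollary~\ref{cor:Wu} give
\[
\sum_k 2\pi|\lambda_k|=\sum_k|\langle\Pi u_0,\varphi_k\rangle|^2\le\|\Pi u_0\|^2 .
\]
This shows in passing that the negative eigenvalues accumulate only at $0$. It also shows each of them is simple: if the eigenspace had dimension at least two, some unit vector $\varphi$ in it would be orthogonal to $\Pi u_0$, contradicting Corollary~\ref{cor:Wu}. So they can be ordered as $\lambda_1<\lambda_2<\dots<0$.

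\textbf{Key step: the reverse inequality.} We must show that $\Pi u_0$ has no component on the spectral subspace of $[0,\infty)$, i.e. $\|\mathbf 1_{[0,\infty)}(L_{u_0})\Pi u_0\|=0$. We would argue by convergence. Since $u_0^N\to u_0$ in $L^2$, the operators $T_{u_0^N}$ converge to $T_{u_0}$ relatively $-\ii\partial_x$-bounded with relative bound tending to $0$. Hence $L_{u_0^N}\to L_{u_0}$ in norm resolvent sense. Take $g$ continuous, with $g=1$ on $[0,\infty)$ and $g=0$ on $(-\infty,-\delta]$. Then
\[
\langle g(L_{u_0^N})\Pi u_0^N,\Pi u_0^N\rangle\to\langle g(L_{u_0})\Pi u_0,\Pi u_0\rangle .
\]
The left side is at most $\sum_{k:\,\lambda^N_k>-\delta}2\pi|\lambda_k^N|$. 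We therefore need a uniform tail bound: the total mass $\sum 2\pi|\lambda_k^N|$ carried by eigenvalues in $(-\delta,0)$ should tend to $0$ as $\delta\to0$, uniformly in $N$. This tightness is the main obstacle.

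Our first attempt at tightness uses the splitting $u_0^N=u_0^M+(u_0^N-u_0^M)$. By \eqref{eq:Cauchy}, the tail $u_0^N-u_0^M$ has $L^2$ norm at most $\varepsilon$ uniformly in $N$. Its contribution to eigenvalues near $0$ is then controlled by a bound of the form $\sum 2\pi|\lambda|\le\|\Pi(\text{potential})\|^2$, via min-max or monotonicity of eigenvalue sums under $L^2$-small perturbations. The finitely many eigenvalues of $u_0^M$ stay uniformly away from $0$. If this comparison argument fails, we would instead test against the sign of $L$ directly: write $\langle L\Pi u,\Pi u\rangle$ through the conservation laws, which are explicit for finite multisolitons and continuous in $u$, and conclude by a weak-convergence argument. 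Once tightness holds, letting $\delta\to0$ gives $\mathbf 1_{[0,\infty)}(L_{u_0})\Pi u_0=0$ and hence \eqref{eq:Plancherel}.

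Finally, the number of eigenvalues is infinite. If there were only finitely many, $\Pi u_0$ would lie in the span of finitely many eigenfunctions. These lie in $\mathcal D(X^*)$ by Proposition~\ref{prop:phi-domain}, so $\Pi u_0\in\mathcal D(X^*)$, which forces $\widehat{\Pi u_0}$ to be continuous at $0$. But $\widehat{\Pi u_0}(\xi)=2\pi\sum_j \ee^{\ii p_j\xi}$, and near $\xi=0^+$ this is unbounded, since its $L^2$ norm on $(0,\eta)$ is infinite-type by the divergence $\|u_0^N\|_{L^1}=2\pi N$. Equivalently, $\Pi u_0$ is not of finite-rank rational form, which contradicts Remark~\ref{rem:charact}.
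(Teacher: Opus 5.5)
\textbf{There is a genuine gap: the reverse inequality is not proved.} Your finite-$N$ identity, the Bessel bound $\sum_k 2\pi|\lambda_k|\le\|\Pi u_0\|_{L^2}^2$, and the simplicity argument via Corollary~\ref{cor:Wu} are correct and agree with the paper. The problem is the inequality $\|\Pi u_0\|_{L^2}^2\le\sum_k 2\pi|\lambda_k|$, which you flag as the main obstacle but leave unresolved.

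Given norm-resolvent convergence and $\|\Pi u_0^N\|_{L^2}^2\to\|\Pi u_0\|_{L^2}^2$, the uniform tightness you ask for is essentially equivalent to \eqref{eq:Plancherel} itself. So it needs a genuinely new input, and neither sketch supplies one. Bounds of the form $\sum 2\pi|\lambda|\le\|\Pi v\|^2$ for the small tail $v=u_0^N-u_0^M$ control total eigenvalue sums. They do not control how much of the spectrum of $L_{u_0^N}$ lies in $(-\delta,0)$, since $L_{u_0^N}$ does not decouple into $L_{u_0^M}$ and $L_v$. The conservation-law fallback is not an argument.

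The missing idea is the sign of the terms in \eqref{eq:u0}. Since $u_0\ge u_0^N\ge 0$ pointwise, $\langle L_{u_0}h,h\rangle\le\langle L_{u_0^N}h,h\rangle$ for all $h\in H^1_+(\R)$. The max--min principle, with essential spectrum $[0,\infty)$, then gives $\lambda_k\le\lambda_k^N<0$ for $1\le k\le N$. Hence
\[
\|\Pi u_0^N\|_{L^2}^2=\sum_{k=1}^N2\pi|\lambda_k^N|\le\sum_{k=1}^\infty 2\pi|\lambda_k|,
\]
and letting $N\to\infty$ gives the missing inequality directly, with no resolvent convergence or tightness. This monotonicity would also rescue your route, since $\sum_{k\ge K}|\lambda_k^N|\le\sum_{k\ge K}|\lambda_k|$ uniformly in $N$.

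\textbf{The infinitude argument.} The same comparison shows at once that $L_{u_0}$ has at least $N$ negative eigenvalues for every $N$. Your last paragraph instead depends on the unproven key step, and its justification is wrong as written. Since $\widehat{\Pi u_0}\in L^2(0,\infty)$, its $L^2$ norm on $(0,\eta)$ is finite. Also, $\widehat{\Pi u_0^N}(0^+)=2\pi N$ says nothing directly about $\widehat{\Pi u_0}$ near $0$. A correct version would use that $2\pi y f(\ii y)\to I_+(f)$ for $f\in\mathcal D(X^*)$, whereas
\[
\Re\bigl(y\,\Pi u_0(\ii y)\bigr)=\sum_j\frac{y(y+\Im p_j)}{(y+\Im p_j)^2+(\Re p_j)^2}\to+\infty
\]
as $y\to\infty$ by Fatou's lemma. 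Your alternative appeal to Remark~\ref{rem:charact} would additionally require knowing that a potential with $\Pi u_0$ in a finite sum of eigenspaces is a finite multisoliton, which you do not establish.
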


\begin{proof}
We know ---see e.g. \cite{Wu16a}--- that the essential spectrum of $L_{u_0}$ is the half--line $[0,\infty )$. Applying the max-min formula for the eigenvalues of $L_{u_0}$, we find that whenever the following maximum is negative, the operator $L_{u_0}$ admits at least $N$ negative eigenvalues, and
\[
\lambda_N=\max_{\mathrm{dim} F=N-1} \min\{\langle L_{u_0}h|h\rangle \;\colon\; h\in H^1_+(\R)\cap F^\perp, \; \|h\|_{L^2}=1\}.
\]
Now, we fix $N\geq 1$ and we show that $u_0$ admits at least $N$ eigenvalues by comparison with the truncated initial data $u_0^N$ given by~\eqref{eq:u0N}. Given that $u_0^N$ is a $N$-soliton potential, it admits exactly $N$ negative eigenvalues $\lambda_1^N<\dots<\lambda_N^N<0$, see e.g. \cite{Sun2020}.\\ Moreover, since $u_0(x)\geq u_0^N(x)$ for every $x\in\R$, we know that $\langle L_{u_0}h|h\rangle \leq \langle L_{u_0^N}h|h\rangle$ for every $h\in H^1_+(\R)$. Hence the max-min formula implies that $u_0$ admits at least $N$ negative eigenvalues, satisfying $\lambda_k\leq \lambda_k^N$ for $1\leq k\leq N$. 

We now show equality~\eqref{eq:Plancherel}. Since $u_0^N$ is a multi--soliton of order $N$,  we know --- see e.g. \cite{Sun2020}--- that  $\Pi u_0^N$ is a linear combination of the $N$ normalized eigenfunctions $\varphi_1^N,\dots, \varphi_N^N$ of $L_{u_0^N}$ associated to $\lambda_1^N,\dots, \lambda_N^N$, and Wu's identity, recalled in Corollary \ref{cor:Wu}, implies that
\[
\|\Pi u_0^N\|_{L^2}^2=\sum_{k=1}^N |\langle \Pi u_0^N, \varphi_j^N)|^2=\sum_{k=1}^N2\pi|\lambda_k^N|,
\]
so that
\[
\|\Pi u_0^N\|_{L^2}^2
	\leq \sum_{k=1}^{\infty}2\pi|\lambda_k|.
\]
Passing to the limit $N\to\infty$, we deduce the inequality 
\(
\|\Pi u_0\|_{L^2}^2
	\leq \sum_{k=1}^{\infty}2\pi|\lambda_k|.
\)

In order to show the converse inequality, we apply Wu's identity to $u_0$ itself. Let $\varphi_k$ be an eigenfunction of $L_{u_0}$ associated to the negative eigenvalue $\lambda_k$ and satisfying $\|\varphi_k\|_{L^2}=1$.  Since the family $(\varphi_k)_{k\geq 1}$ is orthonormal, we deduce that 
\[
\|\Pi u_0\|_{L^2}^2
	\geq \sum_{k=1}^{\infty} |\langle \Pi u_0,\varphi_k\rangle|^2=\sum_{k=1}^{\infty}2\pi|\lambda_k|.
\]
We conclude that~\eqref{eq:Plancherel} holds.
\end{proof}

\begin{remark}\label{rem:open}
In view of the distorted Plancherel formula~\cite[Theorem 26]{BlackstoneGGM24a} proven for initial data $u_0$ with $xu_0\in L^2(\R)$, it is natural to ask whether all initial data $u_0\in L^2(\R)$ such that the eigenvalues  of the Lax operator $L_{u_0}$ satisfy~\eqref{eq:Plancherel} are of the form~\eqref{eq:u0}. This seems to be an open problem.
\end{remark}

\section{Soliton resolution for the infinite-order multisoliton}\label{sec:resol-sol}

In this section, we prove Theorem~\ref{thm:main}.  We adapt the strategy implemented in~\cite{GassotGM26}.

For $f\in L^2_+(\R)$ and $t\in\R$, we introduce an operator $\Omega_t$ on $L^2_+(\R)$  as 
\[
\Omega_tf(z)=\frac{1}{2\ii\pi}I_+((X^*-2tL_{u_0}-z)^{-1}f),
\quad \forall z\in\C_+,
\]
so that the explicit formula \eqref{eq:explicit} reads $\Pi u(t,z)=\Omega_t(\Pi u_0)(z)$.
We first note that~\cite[Lemma 4.1]{GassotGM26} still holds.
\begin{lemma}[See~\cite{GassotGM26}]\label{lem:Omega-t}
For $f\in L^2_+(\R)$, then $\Omega_t f\in L^2_+(\R)$ and we have
\[
|\Omega_tf(z)|
	\leq \frac{\|f\|_{L^2}}{2\sqrt{\pi \Im(z)}},
	\quad \forall z\in\C_+.
\]
\end{lemma}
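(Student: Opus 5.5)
The plan is to use a Poisson-type representation of $\Omega_t f$ through the spectral theorem for the self-adjoint operator $X^*-2tL_{u_0}$. This operator is not self-adjoint. However, in~\cite{GassotGM26} the key structural fact is that $\Im\langle (X^*-2tL_{u_0})h,h\rangle$ has a sign and is controlled by the boundary term. Precisely, for $h\in\mathcal{D}(X^*)$ one has
\[
\langle X^*h,h\rangle-\langle h,X^*h\rangle=\frac{\ii}{2\pi}|I_+(h)|^2 .
\]
This comes from integrating by parts in $\xi$ on $(0,\infty)$ with $\widehat{X^*h}=\ii\widehat h'$ and the Plancherel normalisation $\|h\|^2=\frac1{2\pi}\int_0^\infty|\widehat h|^2$. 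Since $L_{u_0}$ is self-adjoint and $t$ is real, the same identity holds with $X^*$ replaced by $A_t:=X^*-2tL_{u_0}$ on $\mathcal{D}(X^*)\cap\mathcal{D}(L_{u_0})$. I would first check that this intersection is a suitable domain, as in~\cite{GassotGM26}; the argument there uses only $u_0\in L^2$ (or $H^1$), not decay.

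The key steps are as follows. Fix $z\in\C_+$ and set $h:=(A_t-z)^{-1}f$. The resolvent exists because the identity above gives
\[
\Im\langle (A_t-z)h,h\rangle=\frac{1}{4\pi}|I_+(h)|^2-\Im(z)\|h\|^2 ,
\]
so that $-\Im\langle (A_t-z)h,h\rangle\ge \Im(z)\|h\|^2-\frac1{4\pi}|I_+(h)|^2$. Existence is easier via the adjoint statement; I would cite~\cite{GassotGM26}, where the argument does not use $xu_0\in L^2$. Taking imaginary parts of $\langle f,h\rangle=\langle (A_t-z)h,h\rangle$ gives
\[
\Im(z)\|h\|^2+\frac{1}{4\pi}|I_+(h)|^2\cdot(\text{sign}) \le |\langle f,h\rangle|.
\]
The sign convention must be chosen so that both terms add. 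With $\widehat{X^*h}=\ii\widehat h'$ one gets
\[
\langle X^*h,h\rangle=\frac{\ii}{2\pi}\int_0^\infty \widehat h'\,\overline{\widehat h}\,\dd\xi ,
\]
whose imaginary part is $-\frac1{4\pi}|\widehat h(0^+)|^2$. Hence $-\Im\langle (A_t-z)h,h\rangle=\frac{1}{4\pi}|I_+(h)|^2+\Im(z)\|h\|^2$. Both terms are therefore bounded by $\|f\|\,\|h\|$, which gives
\[
\|h\|\le \frac{\|f\|}{\Im z}
\qquad\text{and}\qquad
|I_+(h)|^2\le 4\pi\|f\|\,\|h\| .
\]

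The pointwise bound should not be obtained by combining these two crudely, since that loses a constant. Instead, the two inequalities should be combined sharply. Set $a=\|h\|$ and $b=|I_+(h)|$. Then $\Im(z)a^2+\frac{b^2}{4\pi}\le \|f\|a$, so
\[
\frac{b^2}{4\pi}\le \|f\|a-\Im(z)a^2\le \frac{\|f\|^2}{4\Im z}.
\]
Therefore $b\le \|f\|\sqrt{\pi/\Im z}$, and
\[
|\Omega_tf(z)|=\frac{b}{2\pi}\le \frac{\|f\|}{2\sqrt{\pi\Im z}} .
\]
This gives the constant in the lemma exactly.

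For $\Omega_tf\in L^2_+$, I would first check holomorphy in $z$, which follows from the resolvent identity. I would then show that $\sup_y\int|\Omega_tf(x+\ii y)|^2\dd x\le C\|f\|^2$. The plan is to write $\Omega_t$ as $\Omega_t f=\frac{1}{2\ii\pi}\langle (A_t-z)^{-1}f,\text{``}\delta\text{''}\rangle$. Equivalently, $\Omega_t$ is the adjoint-type operator $f\mapsto$ (generalized Fourier coefficient) studied in~\cite{GassotGM26}: there $\Omega_t=\mathrm{e}^{-\ii tH}$-conjugated Hardy identifications, and $\Omega_t$ was shown to be a contraction on $L^2_+$ (indeed for $t=0$ one has $\Omega_0=\mathrm{Id}$ by the explicit inversion formula). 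The main obstacle is this $L^2_+$ membership, since it must hold for general $u_0$ without the decay used previously. I would argue by density: for $u_0^N$ the statement is~\cite[Lemma 4.1]{GassotGM26}. Passing to the limit via strong resolvent convergence of $A_t^N\to A_t$ (as $\|T_{u_0}-T_{u_0^N}\|\to0$ on $H^1_+$-bounded sets) gives pointwise convergence in $z$ together with uniform $L^2$ bounds on horizontal lines. Fatou's lemma then yields $\Omega_tf\in L^2_+$.
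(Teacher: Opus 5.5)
\textbf{Verdict: the pointwise estimate is correct, but there is a genuine gap in the proof that $\Omega_tf\in L^2_+(\R)$.} For comparison, the paper gives no proof of its own: it only notes that \cite[Lemma 4.1]{GassotGM26} still holds.

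Your argument for the pointwise estimate is correct and self-contained. Your first display has a sign slip: the correct identity is $\langle X^*h,h\rangle-\langle h,X^*h\rangle=-\frac{\ii}{2\pi}|I_+(h)|^2$, which is what you eventually use. With that fixed, the identity $-\Im\langle (A_t-z)h,h\rangle=\frac{1}{4\pi}|I_+(h)|^2+\Im(z)\|h\|^2$ and your optimisation in $\|h\|$ give exactly the constant $\frac{1}{2\sqrt{\pi\Im(z)}}$. One technical correction: for $t\neq 0$, $h=(A_t-z)^{-1}f$ need not lie in $\mathcal{D}(X^*)\cap H^1_+(\R)$. On the Fourier side $\ii\partial_\xi-2t\xi=\ee^{-\ii t\xi^2}\,\ii\partial_\xi\,\ee^{\ii t\xi^2}$, so the natural domain of $A_t$ is $\{h:\ \ee^{\ii t\xi^2}\widehat h\in H^1(0,\infty)\}$. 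The dissipation identity still holds there, because this conjugation does not change $|\widehat h(0^+)|$ and $T_{u_0}$ is bounded and self-adjoint.

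\emph{The gap.} Your approximation argument needs $\sup_{y>0}\int_\R|\Omega^N_tf(x+\ii y)|^2\dd x\leq C\|f\|_{L^2}^2$ uniformly in $N$. The lemma you invoke for $u_0^N$ only gives membership and the pointwise bound, and a bound $|F(z)|\leq C\Im(z)^{-1/2}$ says nothing about horizontal $L^2$ norms. The contraction property you attribute to \cite{GassotGM26} is not stated in a form anyone can check. Strong resolvent convergence alone also does not give $\Omega^N_tf(z)\to\Omega_tf(z)$, because $I_+$ is unbounded on $L^2_+(\R)$. To get it you would have to apply your pointwise bound for $u_0^N$ to $(A^N_t-z)^{-1}(A_t-A^N_t)(A_t-z)^{-1}f$ and use $\|u_0-u_0^N\|_{L^\infty}\leq\sum_{j>N}2/\Im(p_j)\to 0$. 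Finally, if the reference does prove a contraction bound without using decay, that bound applies to $u_0$ directly, which is all the paper claims. The detour through $u_0^N$ is then superfluous.

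\emph{How to close it.} The missing step follows from your own identity.
\begin{itemize}
\item $-\ii A_t$ generates a contraction semigroup $S(\eta)=\ee^{-\ii\eta A_t}$, $\eta\geq 0$, with $\frac{\dd}{\dd\eta}\|S(\eta)f\|^2=-\frac{1}{2\pi}|I_+(S(\eta)f)|^2$.
\item Hence $g(\eta):=I_+(S(\eta)f)$ satisfies $\int_0^\infty|g|^2\dd\eta\leq 2\pi\|f\|^2_{L^2}$.
\item Insert $(A_t-z)^{-1}=\ii\int_0^\infty\ee^{\ii z\eta}S(\eta)\dd\eta$. Do this first for $f\in\mathcal{D}(A_t)$, where $I_+$ is controlled by the graph norm thanks to the same identity, then extend by density.
\item This yields $\Omega_tf(z)=\frac{1}{2\pi}\int_0^\infty\ee^{\ii z\eta}g(\eta)\dd\eta$, i.e.\ $\widehat{\Omega_tf}=g$ and $\|\Omega_tf\|_{L^2}\leq\|f\|_{L^2}$.
\end{itemize}
The pointwise estimate is then just Cauchy--Schwarz in the Fourier inversion formula, and it produces precisely the constant $\frac{1}{2\sqrt{\pi\Im(z)}}$. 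So the lemma is really a contraction statement for $\Omega_t$, and none of it uses decay of $u_0$.
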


In what follows, we  refine the proof of~\cite[Lemma 4.2]{GassotGM26} in order to remove the assumption $xu_0\in L^2(\R)$.

\begin{lemma}\label{lem:perp}
Let $j\geq 1$ and $f\in L^2_+(\R)$ such that $f\perp \varphi_j$. Then
\[
\Omega_tf(z-2t\lambda_j)\longrightarroww{t\to\infty}{} 0,
\quad
\forall z\in\C_+.
\]
\end{lemma}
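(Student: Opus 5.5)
The plan is to reduce the claim to a weak-convergence statement for the resolvent $g_t:=(X^*-2t(L_{u_0}-\lambda_j)-z)^{-1}f$. Indeed $\Omega_tf(z-2t\lambda_j)=\frac{1}{2\ii\pi}I_+(g_t)$, and by definition $g_t$ satisfies
\[
(X^*-z)g_t=f+2t(L_{u_0}-\lambda_j)g_t .
\]

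\textbf{Step 1: the boundary-term identity.} For $g,h\in\mathcal D(X^*)$, integrating by parts on the Fourier side on $(0,\infty)$ gives
\[
\langle X^*g,h\rangle-\langle g,X^*h\rangle=c\,I_+(g)\overline{I_+(h)},\qquad c=-\tfrac{\ii}{2\pi}.
\]
We pair the equation for $g_t$ with $\varphi_j$, which lies in $\mathcal D(X^*)$ by Proposition~\ref{prop:phi-domain}. Since $f\perp\varphi_j$ and $(L_{u_0}-\lambda_j)\varphi_j=0$, the right-hand side contributes nothing, and we obtain
\[
c\,I_+(g_t)\overline{I_+(\varphi_j)}=-\langle g_t,(X^*-\overline z)\varphi_j\rangle .
\]
By \eqref{I+phi} and Corollary~\ref{cor:Wu}, $|I_+(\varphi_j)|^2=2\pi\|\varphi_j\|^2/|\lambda_j|\neq0$. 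It therefore suffices to show
\[
\langle g_t,\psi\rangle\to0 \quad\text{for } \psi:=(X^*-\overline z)\varphi_j .
\]

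\textbf{Step 2: a priori bounds.} First, $\|g_t\|\le\|f\|/\Im z$. This holds because $\Im\langle X^*g,g\rangle=-\frac{1}{4\pi}|I_+(g)|^2\le0$ and $L_{u_0}$ is self-adjoint, so $\Im\langle (X^*-2t(L_{u_0}-\lambda_j)-z)g,g\rangle\le-\Im z\|g\|^2$. Second, Lemma~\ref{lem:Omega-t} applied at the point $z-2t\lambda_j$, whose imaginary part is $\Im z$, gives $|I_+(g_t)|\le C\|f\|/\sqrt{\Im z}$ uniformly in $t$.

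\textbf{Step 3: the part of $\psi$ in the range of $L_{u_0}-\lambda_j$.} Let $\eta\in\mathcal D(L_{u_0})\cap\mathcal D(X^*)$. Using the equation for $g_t$ and then the Step 1 identity,
\[
\langle g_t,(L_{u_0}-\lambda_j)\eta\rangle=\frac{1}{2t}\Big(\langle g_t,(X^*-\overline z)\eta\rangle+c\,I_+(g_t)\overline{I_+(\eta)}-\langle f,\eta\rangle\Big)=O(1/t),
\]
by the bounds of Step 2. Since $\lambda_j$ is an isolated eigenvalue, $(L_{u_0}-\lambda_j)$ has closed range equal to $\ker(L_{u_0}-\lambda_j)^\perp$. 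Provided $(L_{u_0}-\lambda_j)(\mathcal D(L_{u_0})\cap\mathcal D(X^*))$ is dense in that range, the uniform bound on $\|g_t\|$ gives $\langle g_t,\chi\rangle\to0$ for every $\chi\perp\ker(L_{u_0}-\lambda_j)$.

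The density holds because $\mathcal D(L_{u_0})\cap\mathcal D(X^*)$ contains a core for $L_{u_0}$, namely functions with $\widehat h$ smooth and compactly supported in $[0,\infty)$. Indeed $L_{u_0}$ is a bounded perturbation of $-\ii\,\dd/\dd x$ since $u_0\in L^\infty$ by Lemma~\ref{lem:L2}, so its graph norm is equivalent to the $H^1$ norm and these functions are dense in $H^1_+$. One then applies the bounded inverse of $L_{u_0}-\lambda_j$ on the complement of the kernel.

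\textbf{Step 4 (main obstacle): the component of $\psi$ along $\ker(L_{u_0}-\lambda_j)$.} Assuming $\lambda_j$ is simple, it remains to show $\langle g_t,\varphi_j\rangle\to0$. Here I would use \eqref{eq:Xstarphi}, which gives
\[
\varphi_j=\ii(L_{u_0}-\lambda_j)X^*\varphi_j+\frac{1}{2\pi|\lambda_j|}\langle\varphi_j,\Pi u_0\rangle\,\Pi u_0 .
\]
The first term is handled as in Step 3, with $\eta=X^*\varphi_j$ approximated as above since it need not lie in $\mathcal D(X^*)$. For the second term, decompose $\Pi u_0=\langle\Pi u_0,\varphi_j\rangle\varphi_j/\|\varphi_j\|^2+\chi$ with $\chi\perp\varphi_j$, so $\langle g_t,\chi\rangle\to0$ by Step 3. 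Collecting the $\varphi_j$ terms yields
\[
\langle g_t,\varphi_j\rangle\Big(1-\frac{|\langle\Pi u_0,\varphi_j\rangle|^2}{2\pi|\lambda_j|\,\|\varphi_j\|^2}\Big)=o(1).
\]
By Wu's identity (Corollary~\ref{cor:Wu}) the bracket vanishes, so this route degenerates and gives no information. I expect this to be the crux.

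To close it, I would first try to feed in the equation for $g_t$ tested against $X^*\varphi_j$ and use the relation $c\,I_+(g_t)\overline{I_+(\varphi_j)}=-\langle g_t,(X^*-\overline z)\varphi_j\rangle$ from Step 1 as a second linear relation between $I_+(g_t)$ and $\langle g_t,\varphi_j\rangle$. The aim is a nondegenerate $2\times2$ system whose right-hand side is $o(1)$, which would force both quantities to tend to $0$.
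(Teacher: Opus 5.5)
Your Steps 1--3 are correct, but the proof stops where the lemma's real content lies. Step 4 is left open, so $\langle g_t,\varphi_j\rangle\to0$ (equivalently $I_+(g_t)\to0$, by Step 1) is never established. The $2\times 2$ system you sketch does not supply the missing relation. Testing the equation for $g_t$ against $X^*\varphi_j$ and dividing by $2t$ is just your Step 3 identity with $\eta=X^*\varphi_j$. Its main term $\langle g_t,(L_{u_0}-\lambda_j)X^*\varphi_j\rangle$ is a pairing with a vector of $\varphi_j^\perp$; that orthogonality is exactly Wu's identity, which is why your Step 4 computation collapsed. So it says nothing about $\langle g_t,\varphi_j\rangle$. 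It also contains $\langle X^*g_t,X^*\varphi_j\rangle/(2t)$, which you do not control: $X^*g_t=zg_t+f+2t(L_{u_0}-\lambda_j)g_t$ has no uniform bound, and $X^*\varphi_j$ is not known to lie in $\mathcal{D}(X^*)$. The structural problem is that Step 3 moves $L_{u_0}-\lambda_j$ onto the test vector. This yields weak convergence of $g_t$ against $\mathrm{Ran}(L_{u_0}-\lambda_j)=\varphi_j^\perp$ for every $f\in L^2_+(\R)$, without using $f\perp\varphi_j$. The hypothesis therefore enters only through the single scalar relation of Step 1, and you need one more independent relation.

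That relation is already in your Step 2 if you keep the equality instead of the inequality. With $A_t:=X^*-2t(L_{u_0}-\lambda_j)-z$ and $A_tg_t=f$, take imaginary parts of $\langle A_tg_t,g_t\rangle=\langle f,g_t\rangle$ to get
\[
\frac{1}{4\pi}|I_+(g_t)|^2+\Im z\,\|g_t\|^2=-\Im\langle f,g_t\rangle .
\]
Since $f$ itself lies in $\varphi_j^\perp$, Step 3 gives $\langle f,g_t\rangle\to0$. Hence $I_+(g_t)\to0$ (indeed $\|g_t\|\to0$), and Steps 1 and 4 become unnecessary.

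The paper instead puts $L_{u_0}-\lambda_j$ on the data. For $f=(L_{u_0}-\lambda_j)h$ with $h\in\mathcal{D}(X^*)\cap\mathcal{D}(L_{u_0})$, one has $A_th=(X^*-z)h-2tf$. Hence $I_+(g_t)=\frac{1}{2t}\big(I_+(A_t^{-1}(X^*-z)h)-I_+(h)\big)=O(1/t)$ by Lemma~\ref{lem:Omega-t}, and the uniform bound of that lemma extends this to all $f\perp\varphi_j$ by density. To get the dense set, the paper takes $f\in\mathcal{D}(X^*)\cap\varphi_j^\perp$ and reruns the Fourier argument of Proposition~\ref{prop:phi-domain} to show $h\in\mathcal{D}(X^*)$. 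Your core argument in Step 3, showing that $(L_{u_0}-\lambda_j)(\mathcal{D}(L_{u_0})\cap\mathcal{D}(X^*))$ is dense in $\varphi_j^\perp$, does this more cheaply. Either completion is therefore short once you use $f\perp\varphi_j$ a second time.
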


\begin{proof}
By density of $\mathcal{D}(X^*)\cap\varphi_j^\perp$ in $\varphi_j^\perp$ and the uniform bound on $\Omega_t f$ given in Lemma~\ref{lem:Omega-t}, it is enough to prove the result when $f\in\mathcal{D}(X^*)\cap \varphi_j^\perp$. In this case, by functional calculus for the selfadjoint operator $L_{u_0}$, there exists $h\in L^2_+(\R)$ such that
\[
(L_{u_0}-\lambda_j)h=f.
\]
Let us show that $h\in\mathcal{D}(X^*)$. By applying the Fourier transform to the above equation, we find
\begin{equation}\label{eq:h-fourier} 
\forall \xi>0,\quad 
\widehat{h}(\xi)=\frac{\widehat{f}(\xi)}{\xi-\lambda_j}+\frac{\widehat{u_0h}(\xi)}{2\pi(\xi-\lambda_j)}.
\end{equation}
At this stage, we just adapt the proof of Proposition \ref{prop:phi-domain}. The only difference of \eqref{eq:h-fourier} with respect to \eqref{eq:phi-fourier} is the additional term $(\xi -\lambda_j)^{-1}\widehat f(\xi )$, which obviously belongs to $H^1(0,\infty )$ since $f\in \mathcal {D}(X^*)$. Hence the same proof leads to $\widehat h\in H^1(0,\infty )$, namely $h\in \mathcal{D}(X^*).$

The conclusion now follows from the proof of~\cite[Lemma 4.2]{GassotGM26}: assuming $f\in\mathcal{D}(X^*)\cap\varphi_j^\perp$ so that $h\in\mathcal{D}(X^*)$, we write for $z\in\C_+$
\[
\Omega_tf(z-2t\lambda_j)
	=-\frac{I_+(h)}{4\ii\pi t}+\frac{\Omega_t(X^*h-zh)(z-2t\lambda_j)}{2t},
\]
which goes to zero as $t\to+\infty$ thanks to Lemma~\ref{lem:Omega-t}.
\end{proof}

Next, we adapt the proof of~\cite[Lemma 3.2]{GassotGM26} and infer that   $\Pi u(t,z-2t\lambda_j)$ admits a pointwise limit as $t\to+\infty$. 
\begin{lemma}\label{lem:pointwise}
Let $j\geq 1$ and $z\in\C_+$. Then as $t\to\infty$, we have
\[
\Pi u(t,z-2t\lambda_j)\longrightarroww{t\to\infty}{} \Pi R_{p_j^\infty}(z),
\quad
p_j^{\infty}:=-\langle X^*\varphi_j,\varphi_j\rangle.
\]
\end{lemma}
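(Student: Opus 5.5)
Following the strategy of \cite[Lemma 3.2]{GassotGM26}, I plan to decompose $\Pi u_0$ along the eigenfunction $\varphi_j$ (normalized, $\|\varphi_j\|_{L^2}=1$) and its orthogonal complement:
\[
\Pi u_0=\langle \Pi u_0,\varphi_j\rangle \varphi_j+f,\qquad f\perp\varphi_j .
\]
By the explicit formula, $\Pi u(t,z-2t\lambda_j)=\langle \Pi u_0,\varphi_j\rangle\,\Omega_t\varphi_j(z-2t\lambda_j)+\Omega_tf(z-2t\lambda_j)$, and Lemma~\ref{lem:perp} shows that the second term tends to $0$. So everything reduces to computing $\Omega_t\varphi_j(z-2t\lambda_j)$ exactly, and this is possible because Proposition~\ref{prop:phi-domain} gives $\varphi_j\in\mathcal D(X^*)$.

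The key step is to find $g=(X^*-2tL_{u_0}-z+2t\lambda_j)^{-1}\varphi_j$. Since $L_{u_0}\varphi_j=\lambda_j\varphi_j$, the operator acting on $\varphi_j$ reduces to $X^*-z$. Write $X^*\varphi_j=-p_j^\infty\varphi_j+r$, where $p_j^\infty=-\langle X^*\varphi_j,\varphi_j\rangle$ and $r\perp\varphi_j$. Then
\[
(X^*-2t(L_{u_0}-\lambda_j)-z)\varphi_j=-(z+p_j^\infty)\varphi_j+r .
\]
Hence
\[
\Omega_t\varphi_j(z-2t\lambda_j)=-\frac{1}{2\ii\pi(z+p_j^\infty)}\Bigl(I_+(\varphi_j)-I_+\bigl((X^*-2tL_{u_0}-z+2t\lambda_j)^{-1}r\bigr)\Bigr).
\]
The last term equals $2\ii\pi\,\Omega_t r(z-2t\lambda_j)$ and tends to $0$ by Lemma~\ref{lem:perp}, because $r\perp\varphi_j$. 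This step requires $z+p_j^\infty\neq 0$, and the resolvent must make sense at $z-2t\lambda_j\in\C_+$. Both are guaranteed if $\Im p_j^\infty>0$.

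To show $\Im p_j^\infty>0$, I will take the inner product of \eqref{eq:Xstarphi} with $\varphi_j$. This should give $\Im\langle X^*\varphi_j,\varphi_j\rangle=-\tfrac12\|\varphi_j\|^2$. Indeed, $X^*-X$ on $L^2_+$ has imaginary part $\tfrac{\ii}{2}|I_+|^2$-type, via the identity $\langle X^*f,f\rangle-\overline{\langle X^*f,f\rangle}=\tfrac{\ii}{2\pi}|I_+(f)|^2$ obtained by integration by parts in Fourier. Combined with Corollary~\ref{cor:Wu}, this gives $\Im p_j^\infty=\tfrac{|I_+(\varphi_j)|^2}{4\pi}=\tfrac{|\lambda_j|}{2}\cdot\tfrac{1}{|\lambda_j|^2}\cdot\dots>0$. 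I expect this to come out as $\Im p_j^\infty=\frac{1}{2|\lambda_j|}$, consistent with $c_p=2|\lambda_j|$.

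Finally, I assemble the limit. By \eqref{I+phi} and Corollary~\ref{cor:Wu},
\[
\langle \Pi u_0,\varphi_j\rangle\, I_+(\varphi_j)=\frac{|\langle \Pi u_0,\varphi_j\rangle|^2}{|\lambda_j|}=2\pi .
\]
So the limit is $-\frac{2\pi}{2\ii\pi(z+p_j^\infty)}=\frac{\ii}{z+p_j^\infty}=\Pi R_{p_j^\infty}(z)$, as claimed. The main obstacle I expect is justifying the inversion, namely that $X^*-2t(L_{u_0}-\lambda_j)-z$ is invertible on the relevant domain for $z\in\C_+$. The explicit formula already presupposes this for $z-2t\lambda_j\in\C_+$, which holds, so I will invoke the framework of \cite{Gerard22,GassotGM26} there.
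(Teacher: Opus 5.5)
Your proposal is correct and takes essentially the paper's route: decompose $\Pi u_0$ along $\varphi_j$, dispose of the orthogonal part with Lemma~\ref{lem:perp}, and invert $X^*-2t(L_{u_0}-\lambda_j)-z$ exactly on $\varphi_j$ via $X^*\varphi_j=-p_j^\infty\varphi_j+r$ with $r\perp\varphi_j$. The paper's $\psi_j,g_j$ identity is yours divided by $-(z+p_j^\infty)$, and the constant is evaluated, as you do, with \eqref{I+phi} and Corollary~\ref{cor:Wu}. The only thing to fix is the side computation of $\Im p_j^\infty$ (which the paper records as \eqref{eq:impinfty}). Pairing \eqref{eq:Xstarphi} with $\varphi_j$ only reproduces Wu's identity; it does not give $\Im\langle X^*\varphi_j,\varphi_j\rangle=-\tfrac12\|\varphi_j\|^2$. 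Also, integration by parts gives $\langle X^*f,f\rangle-\overline{\langle X^*f,f\rangle}=-\tfrac{\ii}{2\pi}|I_+(f)|^2$ (your sign is flipped), whence $\Im p_j^\infty=\tfrac{|I_+(\varphi_j)|^2}{4\pi}=\tfrac{1}{2|\lambda_j|}>0$, exactly as you anticipated.
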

As we  have already observed, applying the operator $\Pi$ to~\eqref{eq:Rp} leads to $\Pi R_{p_j^{\infty}}(z)=\frac{\ii}{z+p_j^{\infty}}$. Moreover, from \eqref{I+phi}, we know that 
\begin{equation}\label{eq:impinfty}
 \Im( p_j^\infty) =-\Im \langle X^*\varphi_j,\varphi_j\rangle =-\frac{1}{2\pi}\Re \langle \frac{\dd}{\dd\xi}\widehat \varphi_j,\widehat\varphi_j\rangle =\frac{|I_+(\varphi_j)|^2} {4\pi}=\frac{1}{2|\lambda_j|}.
\end{equation}
Let us now recall the ideas of proof of this Lemma.
\begin{proof}
We decompose $\Pi u_0=\langle \Pi u_0,\varphi_j\rangle \varphi_j+f_j$, where $f_j\in\varphi_j^\perp$. Applying Lemma~\ref{lem:perp}, we know that $\Omega_tf_j(z-2t\lambda_j)\to 0$ as $t\to\infty$.
Next, set
\[
\psi_j:=\frac{\varphi_j}{\langle X^*\varphi_j,\varphi_j\rangle-z},
\quad
g_j:=\frac{X^*\varphi_j-\langle X^*\varphi_j,\varphi_j\rangle\varphi_j}{\langle X^*\varphi_j,\varphi_j\rangle-z},
\]
and observe that
\[
(X^*-2t(L_{u_0}-\lambda_j)-z)\psi_j=\varphi_j+g_j,
\]
so that
\[
\Omega_t(\varphi_j)(z-2t\lambda_j)=\frac{I_+(\psi_j)}{2\ii\pi}-\Omega_t(g_j)(z-2t\lambda_j).
\]
Since $g_j\in\varphi_j^{\perp}$, Lemma~\ref{lem:perp} implies that $\Omega_t(g_j)(z-2t\lambda_j)$ as $t\to+\infty$. Moreover,
\[
I_+(\psi_j)=\frac{I_+(\varphi_j)}{\langle X^*\varphi_j,\varphi_j\rangle -z}.
\]
Thanks to Corollary \ref{cor:Wu} and \eqref{I+phi}, we have $\langle \Pi u_0,\varphi_j\rangle I_+(\varphi_j)=2\pi$. Combining all the identities together, we conclude that
\[
\Omega_t(\Pi u_0)(z-2t\lambda_j)\longrightarroww{t\to\infty}{}\frac{\ii}{z-\langle X^*\varphi_j,\varphi_j\rangle}.
\qedhere
\]
\end{proof}

\begin{proof}[Proof of Theorem~\ref{thm:main}]
Let us fix $N\geq 0$, and define the remainder term
\[
 r_N(t,x):=u(t,x)-\sum_{j=1}^NR_{p_j^\infty}(x-c_{p_j^\infty}t),
 \quad
c_{p_j^\infty}=\frac{1}{\Im p_j^\infty}=2|\lambda_j|.
\]
We expand
\begin{multline*}
\|\Pi r_N(t,\cdot)\|_{L^2}^2
	=\|\Pi u(t,\cdot)\|_{L^2}^2+\sum_{j=1}^N \|\Pi R_{p_j^\infty}\|_{L^2}^2
	-2\sum_{j=1}^N\Re\langle \Pi u(t,\cdot),\Pi R_{p_j^\infty}(\cdot+2t\lambda_j)\rangle
	\\
	-2\sum_{\substack{j,k=1\\ j\neq k}}^N\Re\langle \Pi R_{p_j^\infty}(\cdot+2t\lambda_j),\Pi R_{p_k^\infty}(\cdot+2t\lambda_k)\rangle.
\end{multline*}
Given the weak convergence $\Pi u(t,\cdot-2t\lambda_j)\rightharpoonup \Pi R_{p_j^\infty}$ provided by Lemma~\ref{lem:pointwise}, we find that for every $j\geq 1$,
\[
\langle \Pi u(t,\cdot),\Pi R_{p_j^\infty}(\cdot+2t\lambda_j)\rangle
	=\langle \Pi u(t,\cdot-2t\lambda_j),\Pi R_{p_j^\infty}\rangle
	\longrightarroww{t\to\infty}{} \langle \Pi R_{p_j^\infty},\Pi R_{p_j^\infty}\rangle.
\]
Moreover, when $j\neq k$, we have $\lambda_j\neq \lambda_k$, and in particular
\[
\langle \Pi R_{p_j^\infty}(\cdot+2t\lambda_j),\Pi R_{p_k^\infty}(\cdot+2t\lambda_k)\rangle
	\longrightarroww{t\to\infty}{} 0.
\]
Combining these observation with the conservation of the $L^2$ norm under the flow of~\eqref{BOinit},  this leads to the following identity as $t\to+\infty$:
\begin{equation*}
\|\Pi r_N(t,\cdot)\|_{L^2}^2
	=\|\Pi u_0\|_{L^2}^2
	-\sum_{j=1}^N \|\Pi R_{p_j^\infty}\|_{L^2}^2
	+o_{t\to\infty}(1).
\end{equation*}
Thanks to~\eqref{eq:Plancherel} and the identity $\|\Pi R_{p_j^\infty}\|_{L^2}^2=2\pi|\lambda_j|$, we find that
\begin{equation*}
\|\Pi r_N(t,\cdot)\|_{L^2}^2
	=\sum_{j=N+1}^{\infty} 2\pi|\lambda_j|
	+o_{t\to\infty}(1),
\end{equation*}
where the first term in the right-hand side is the remainder term of a convergent series in $N$. Letting $t\to\infty$ then $N\to\infty$, we find~\eqref{eq:resol-sol}.

Let us prove \eqref{eq:resol-sol-bis}. First, in view of \eqref{eq:impinfty} and of Proposition \eqref{eq:Plancherel}, we have
\[ \sum_{j=1}^\infty \frac{1}{\Im (p_j^\infty)} <\infty .\]
Since 
\[\| R_p\|_{L^\infty}=\frac{2}{\Im (p)},\]
we infer that the series \eqref{eq:usol} defining $u_{\mathrm{sol}}(t,x)$ is uniformly convergent, so that $u_{\mathrm{sol}}(t,\cdot )$ is continuous on $\R$ and tends to $0$ at infinity. Furthermore, because of \eqref{eq:RpdotHs}, the series \eqref{eq:usol} also converges normally in $\dot H^s(\R)$ for every $s\geq 1/2$. Hence $u_{\mathrm{sol}}(t,\cdot )\in \dot H^s$ for every $s\geq 1/2$. 
On the other hand, due to the Benjamin--Ono conservation laws --- see e.g. \cite{Gerard26} ---, we know that $u(t,\cdot )$ is bounded in all the Sobolev spaces $H^k (\R)$ for $k\geq 0$. 
Given $N\geq 1$, set
\[ u_{\mathrm{sol}}^N(t,x):=\sum_{j=1}^N R_{p_j^\infty}(x-c_{p_j^\infty}t).\]
Recall that thanks to~\eqref{eq:RpdotHs}, we have $\|R_{p_j^\infty}\|_{\dot{H}^s}^2=C_s\Im (p_j^\infty)^{-s-1/2}$ for some constant $C_s>0$, and that according to~\eqref{eq:impinfty}, we have $\Im(p_j^\infty)=(2|\lambda_j|)^{-1}$. Given $s\geq 1/2$, we get
 \begin{align*}
 \Vert u(t,\cdot)-u_{\mathrm{sol}}(t,\cdot)\Vert _{\dot H^s}&\leq  \Vert u(t,\cdots)-u_{\mathrm{sol}}^N(t,\cdot)\Vert _{\dot H^s}+ \Vert u_{\mathrm{sol}}^N(t,\cdot)-u_{\mathrm{sol}}(t,\cdot)\Vert _{\dot H^s}\\
 &\leq  \Vert u(t,\cdot)-u^N_{\mathrm{sol}}(t,\cdot)\Vert _{L^2}^{1/2} \Vert u(t,\cdots)-u^N_{\mathrm{sol}}(t,\cdot)\Vert _{\dot H^{2s}}^{1/2}+C_s\sum_{j=N+1}^\infty |\lambda_j|^{s+1/2}\\
 &\leq \tilde C_s (\Vert u(t,\cdot)-u^N_{\mathrm{sol}}(t,\cdot)\Vert _{L^2} ^{1/2}+\sum_{j=N+1}^\infty |\lambda_j|^{s+1/2}).
 \end{align*}
 Similarly, since $\Vert v\Vert_{L^\infty}^2\leq \Vert v\Vert_{L^2} \Vert v\Vert_{\dot H^1}$, we have
\[
  \Vert u(t,\cdot)-u_{\mathrm{sol}}(t,\cdot)\Vert _{L^\infty}\leq C (\Vert u(t,\cdot)-u^N_{\mathrm{sol}}(t,\cdot)\Vert _{L^2} ^{1/2}+\sum_{j=N+1}^\infty |\lambda_j|).
\]
Passing to the upper limit as $t\to \infty$, we obtain, for every $N\geq 1$,
 \[ \limsup_{t\to \infty} (  \Vert u(t,\cdots)-u_{\mathrm{sol}}(t,\cdot)\Vert _{L^\infty}+\Vert u(t,\cdot)-u_{\mathrm{sol}}(t,\cdot)\Vert _{\dot H^s})\lesssim  \delta_N^{1/2}+\sum_{j=N+1}^\infty |\lambda_j|,\]
 where $\delta_N:=\limsup_{t\to \infty}\Vert u(t,\cdot)-u^N_{\mathrm{sol}}(t,\cdot)\Vert _{L^2}$. 
Hence \eqref{eq:resol-sol-bis}  follows by letting $N\to \infty$. 
\end{proof}

\begin{remark}
In view of the explicit formula for the Benjamin-Ono hierarchy given by~\cite{GerardHe26hierarchy} (see also~\cite{KillipLaurensVisan23-BO}), it seems that the proof of Theorem~\ref{thm:main} adapts to the equations of the Benjamin-Ono hierarchy associated to the higher-order Hamiltonians $E_k$. The proof also adapts to equations obtained from finite linear combinations of these conservation laws, of the form 
\[\partial_t u=\partial_x\nabla\left(\sum_{k=1}^K \alpha_k E_k(u)\right).
\]
The main modification is the fact that the speed of the traveling wave $c_{p_k^{\infty}}$ from Theorem~\ref{thm:main} should be replaced by the corresponding speed $\widetilde{c}_{p_k^{\infty}}$ in the hierarchy according to~\cite{GerardHe26hierarchy}:
\[
\widetilde{c}_{p_k^{\infty}}:=\sum_{k=1}^K \alpha_k c_{k,p_{k}^\infty},
\quad
c_{k,p}:=(-1)^{k+1}\frac{k+1}{(2\Im(p))^n}.
\]
\end{remark}

\appendix
\section{A characterization of infinite-order multisolitons}\label{sec:appendix}

In this section, we prove the main statement in Remark~\ref{rem:charact}, which we state as follows.

\begin{proposition}
Denote by $\mathscr M$ the set of finite multisolitons, namely functions of the form
\[ u_0(x)=\sum_{j=1}^N \frac{2\Im{p_j}}{|x+p_j|^2},\]
where $N$ is a positive integer and $p_1,\dots,p_N$ are elements of $\C_+$. The closure of $\mathscr M$ in $L^2(\R)$ consists of the union of $\mathscr M$ and of the set of  infinite--order multisolitons according to Definition~\ref{def:u0}.
\end{proposition}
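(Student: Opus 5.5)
Two inclusions are needed. For the easy one, take $u_0$ an infinite-order multisoliton. By Lemma~\ref{lem:L2} and its proof, the partial sums $u_0^N$ of \eqref{eq:u0N} belong to $\mathscr M$. Moreover $\Pi u_0^N$ is Cauchy in $L^2$ by \eqref{eq:Cauchy}, so $u_0^N=2\Re\Pi u_0^N$ converges in $L^2$. Since each term is nonnegative, the monotone convergence $u_0^N\uparrow u_0$ identifies the $L^2$-limit with $u_0$. Hence $u_0\in\overline{\mathscr M}$.

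For the converse, let $u_0^n\in\mathscr M$ converge to $u_0$ in $L^2$, with $u_0\notin\mathscr M$. The key tool is the Hardy-space structure. Write $F_n:=\Pi u_0^n=\sum_{j}\frac{\ii}{x+p_j^{(n)}}$, a rational function with poles $-p^{(n)}_j$ in the lower half-plane. The formula \eqref{eq:Cauchy} gives
\[
\|\Pi u_0^n\|_{L^2}^2=\sum_{j,k}\frac{4\pi\Im(p_j^{(n)})}{|p^{(n)}_j-\overline{p^{(n)}_k}|^2}\geq \sum_j\frac{\pi}{\Im p_j^{(n)}},
\]
which is uniformly bounded. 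Each pole therefore satisfies $\Im p^{(n)}_j\geq c>0$, uniformly in $n$ and $j$. Also, the number of poles in any bounded region is uniformly bounded: a pole with $|p|\le K$ contributes at least $\pi/K$. By a diagonal extraction, I enumerate the poles so that, after passing to a subsequence, for each $j$ either $p_j^{(n)}\to p_j\in\C_+$ or $|p^{(n)}_j|\to\infty$. Poles of multiplicity are allowed, since the $p_j$ in Definition~\ref{def:u0} need not be distinct.

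Next I show that $u_0$ is given by \eqref{eq:u0} with the finite limits $p_j$. Evaluate the holomorphic extensions: $L^2_+$ convergence implies locally uniform convergence on $\C_+$. Consider $\Pi u_0^n(z)$ on compact sets of $\C_+$, or better the measures $u_0^n\,dx$ paired with Poisson kernels. Each term $R_p$ with $|p|\to\infty$ and $\Im p\ge c$ tends to $0$ locally uniformly, but the accumulated mass of escaping terms must not contribute in the limit. To control this, I use positivity together with Fatou. Because all terms are nonnegative, for each fixed $J$ one has $\sum_{j\le J}R_{p_j}\le \liminf_n u_0^n=u_0$ pointwise, after a further subsequence converging a.e. Hence $v:=\sum_j R_{p_j}\le u_0$ and $v\in L^2$, so \eqref{eq:norm} holds by Lemma~\ref{lem:L2}. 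The remainder $w_n:=u_0^n-\sum_{j\text{ finite}}(\cdot)$ consists of escaping solitons and converges to $w:=u_0-v\ge0$ in the weak $L^2$ sense.

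\textbf{Main obstacle:} proving $w=0$. The escaping part has $\Pi w_n=\sum_{|p_j^{(n)}|\to\infty}\frac{\ii}{x+p_j^{(n)}}$. Locally uniformly on $\C_+$, I need this to tend to $0$. Grouping, $\Pi w_n(z)=\ii\sum_j (z+p^{(n)}_j)^{-1}$, and Cauchy--Schwarz gives $|\Pi w_n(z)|\le (\#\text{terms})^{1/2}(\sum|z+p_j^{(n)}|^{-2})^{1/2}$, so counting alone is not enough. Instead I would use $|z+p|^{-1}\lesssim_z \Im(p)^{1/2}\,|p-\overline{q}|^{-1}$-type comparisons. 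Alternatively, and more simply, I test against the Poisson kernel. For $y>0$ fixed, $\int w_n(x)P_y(x-a)\,dx=\sum R_{p^{(n)}_j+\ii y}(-a)$ is a sum of nonnegative terms. Splitting the escaping poles into far ones, those with $|p|\ge K$, each contributes at most $C_y\Im p/|p|^2\lesssim C_y/\Im p$ when $\Im p\gtrsim|p|$. Otherwise I bound the contribution by $C_y\Im p/|p|^2$ and sum using the diagonal terms $j=k$ together with the off-diagonal terms of the bounded norm series paired with a fixed finite pole. I expect this tail estimate, uniform in $n$, to be the delicate point. If it fails, the fallback is to use weak $L^2$ convergence directly: $w_n\rightharpoonup w$ while $\Pi w_n\to0$ locally uniformly, which forces $\Pi w=0$ and hence $w=0$ since $w$ is real. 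Finally, if only finitely many $p_j$ are finite then $u_0\in\mathscr M$, a contradiction, so infinitely many remain and $u_0$ is an infinite-order multisoliton.
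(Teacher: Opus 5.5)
You have a genuine gap at exactly the step you flag: you never prove that the poles escaping to infinity contribute nothing in the limit, i.e.\ that $u_0=v$. Your other steps are fine: the easy inclusion, the uniform bound $\Im p_j^{(n)}\ge c$, the bounded pole count in bounded sets, the diagonal extraction, and the Fatou bound $v\le u_0$ giving \eqref{eq:norm} for the finite limits.

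Your escaping-mass estimates only give boundedness, not decay. Pairing with a fixed finite pole in the norm series bounds $\sum_{\mathrm{esc}}\Im p/|p|^2$ by a constant, and diagonal terms alone cannot do better. For instance, $m$ copies of $R_{\ii a}$ with $2m/a\to\alpha>0$ keep $\sum\pi/\Im p$ bounded, yet converge locally uniformly to the constant $\alpha$. Only the full $L^2$ bound excludes this, since $\|mR_{\ii a}\|_{L^2}^2=4\pi m^2/a\to\infty$. So the $L^2$ norm must enter essentially, and you never use it that way.

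Your fallback assumes $\Pi w_n\to0$ locally uniformly, which is the claim itself, so it is circular. The object $w_n$ is also ill-posed. If infinitely many indices have finite limits, the lost mass can sit at indices $j(n)\to\infty$. Then $u_0^n-\sum_{j\ \mathrm{finite}}R_{p_j^{(n)}}$ vanishes identically, and the asserted weak convergence $w_n\rightharpoonup u_0-v$ is equivalent to $u_0=v$.

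To close the gap within your framework, cut by a radius $K$ rather than by a fixed index set. Let $w=\sum R_p$ be the part with $|p|\ge K\ge 4A$. For $|x|\le A$ and $|y|\le K/2$ you have termwise $R_p(x)\le 4R_p(y)$. Hence $K\,w(x)^2\le 16\|w\|_{L^2}^2\le 16\|u_0^n\|_{L^2}^2$, so the far part is $O(K^{-1/2})$ on $[-A,A]$, uniformly in $n$. The near part has boundedly many terms and converges to $\sum_{|p_j|<K}R_{p_j}$ (choose $K\notin\{|p_j|\}$). Letting $n\to\infty$ and then $K\to\infty$ gives $u_0=v$.

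The paper avoids tracking poles altogether. It writes $u_0^N=-\ii\Theta_N'/\Theta_N$ with $\Theta_N$ a finite Blaschke product and extracts an inner limit $\Theta$ with $\Theta'=\ii u_0\Theta$. The canonical factorization then gives $u_0=\alpha+\sum_j R_{p_j}+\int(1+t^2)(x-t)^{-2}\dd\nu(t)$. Escaped mass can only appear as the factor $\ee^{\ii\alpha z}$ or the singular part, and both vanish because the three terms are nonnegative and $u_0\in L^2$. That route buys automatic compactness at the cost of heavy machinery; the repaired version of yours is elementary and quantitative.

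Finally, your last step (and the statement itself, and the paper's proof) misses the case of no finite limits. Since $\|R_{\ii n}\|_{L^2}^2=4\pi/n\to0$, we have $0\in\overline{\mathscr M}$. But $0$ is neither in $\mathscr M$ nor an infinite-order multisoliton, so one should allow $N=0$.
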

\begin{proof}
We first note that thanks to Section~\ref{sec:construction}, any $u_0$ satisfying Definition~\ref{def:u0}, is the $L^2$-limit of its partial sums $u_0^N$ defined in~\eqref{eq:u0N}, which are finite multisolitons.

Conversely, let $u_0\in L^2(\R)$ be the $L^2$-limit of a sequence $(u_0^N)_{N\geq 1}$ of finite multisolitons. Writing
\[
u_0^N(x)=\sum_{j=1}^N \frac{2\Im(p_j^N)}{|x+p_j^N|^2},
\]
one can see that
\begin{equation}\label{eq:u0N-theta}
u_0^N(x)=\frac{-\ii \Theta_N'(x)}{\Theta_N(x)}
\end{equation}
where $\Theta_N(x)$ is the finite Blaschke product given by
\[
\Theta_N(x)=\prod_{j=1}^N\frac{x+\overline{p_j^N}}{x+p_j^N}.
\]
Note that $|\Theta_N(x)|=1$ for every $x\in\R$, and that $\Theta_N$ extends as a holomorphic function to the upper half-plane  $\C_+$ with $|\Theta_N(z)|\leq 1$ for every $z\in\C_+$.

We first show that $(\Theta_N)_N$ is locally uniformly convergent to a limiting function $\Theta$.
Integrating the equation~\eqref{eq:u0N-theta} satisfied by $\Theta_N$, we can write for every $x\in\R$
\[
\Theta_N(x)=\Theta_N(0)\exp\left(\ii\int_0^x u_0^N(y)\dd y\right).
\]
Since $|\Theta_N(0)|=1$, up to some subsequence, one can assume that $(\Theta_N(0))_N$ is convergent to some limit $\Theta(0)$ with $|\Theta(0)|=1$.
Since the sequence $(u_0^N)_N$ converges to $u_0$ in $L^2(\R)$, we also know that  the sequence $(\Theta_N)_N$ is locally uniformly convergent on $\R$ to some limit $\Theta$ as $N\to\infty$. In particular, $\Theta$ is continuous on $\R$ and $|\Theta(x)|=1$ for every $x\in\R$.

Next, we show that $\Theta$ is an inner function, in the sense that it admits a holomorphic extension to $\C_+$ with $|\Theta(z)|\leq 1$ for every $z\in \C_+$, and that $|\Theta(x)|=1$ for every $x\in\R$. We observe that
\[
\|u_0^N\Theta_N-u_0\Theta\|_{L^2}\leq \|u_0^N-u_0\|_{L^2}\|\Theta_N\|_{L^\infty}+\| u_0(\Theta_N-\Theta)\|_{L^2},
\]
and the dominated convergence theorem implies that $\Theta_N'=\ii u_0^N\Theta_N$ converges to $\ii u_0\Theta$ in $L^2(\R)$. Since $\Theta_N'$ also tends to $\Theta'$ in the sense of distributions, we conclude that $\Theta'=\ii u_0\Theta\in L^2(\R)$.
Moreover, we have seen that $|\Theta_N(z)|\leq 1$ for $z\in\C_+$. The Montel theorem implies that up to some subsequence, $\Theta_N$ has a pointwise limit that we still denote $\Theta$, and that the convergence is uniform on compact subsets of $\C_+$. Moreover, $\Theta$ is holomorphic on $\C_+$ and $|\Theta(z)|\leq 1$ for every $z\in\C_+$.
Given that $\Theta_N'$ is bounded in $L^2_+(\R)$ we get from the inverse Fourier formula and the Cauchy-Schwarz inequality that  there is $C>0$ such that for every $x\in\R$ and $y>0$, 
\[
|\Theta_N'(x+\ii y)|\leq C\left(\int_0^\infty \ee^{-2y\xi}\dd\xi\right)^{1/2}\|\widehat{\Theta}_N'\|_{L^2(\R_+)}
\leq \frac{C'}{\sqrt{y}},
\]
so that  for every $\delta>0$ and $x\in\R$,
\[
|\Theta_N(x+\ii\delta)-\Theta_N(x)|	\leq C\sqrt{\delta}.
\]
Thus, for every $\delta>0$ and $x\in\R$,
\[
|\Theta(x+\ii\delta)-\Theta(x)|
=\lim_{N\to\infty} |\Theta_N(x+\ii\delta)-\Theta_N(x)|
\leq C\sqrt{\delta}.
\]
Hence the limiting function $\Theta$ defined on $\C_+$ is a holomorphic extension the limiting function $\Theta$ defined on $\R$. We conclude that $\Theta$ is an inner function in the Hardy space $H^\infty(\R)$.

The Beurling factorization theorem  (see for instance~\cite{Duren70book}) now implies that $\Theta$ admits the decomposition
\[
\Theta(z)
	=\ee^{\ii\gamma}\ee^{\ii\alpha z}B(z)\exp\left(\ii\int_{\R}\frac{1+tz}{t-z}\dd \nu(t)\right),
	\quad z\in\C_+,
\]
where:
\begin{itemize}
\item $\gamma\in\R$,  $\alpha\geq 0$;
\item  $B$ is a Blaschke product
\[
B(z)=\prod_{j\geq 1}\left(\ee^{\ii\alpha_j}\frac{z+\overline{p_j}}{z+p_j}\right)
\]
where $\alpha_j=0$ if $p_j=\ii$, and $\alpha_j=\arg(p_j^2+1)$ if $p_j\neq \ii$;
\item $\nu$ is a nondecreasing function of bounded variation over $\R$ satisfying $\nu'(t)=0$ almost-everywhere, so that the measure $d\nu$ is singular with respect to the Lebesgue measure.
\end{itemize}
Taking the logarithmic derivative, we find that for every $x\in\R$,
\begin{equation}\label{eq:u-3sum}
u_0(x)
	=-\ii\frac{\Theta'(x)}{\Theta(x)}
	=\alpha 
	+ \sum_{j\geq 1}\frac{2\Im(p_j)}{|x+p_j|^2}
	+\int_{\R}\frac{(1+t^2)\dd\nu(t)}{(x-t)^2}.
\end{equation}
Given that $u_0$ is a sum of three nonnegative terms and $u_0\in L^2(\R)$, each of these terms should belong to $L^2(\R)$. In particular, we get that $\alpha=0$.  We claim that $\dd\nu$ is identically zero. Indeed, since $x\mapsto \int_{\R}\frac{(1+t^2)\dd\nu(t)}{(x-t)^2}$ and $x\mapsto \frac{1}{1+|x|}$ both belong to $L^2(\R)$, the following integral needs to be finite:
\[
I
	=\int_{\R}(1+t^2)\dd\nu(t)\int_{\R}\frac{\dd x}{(1+|x|)(x-t)^2}.
\]
However, for every $t\in\R$, we know that
\[
\int_{\R}\frac{\dd x}{(1+|x|)(x-t)^2}=+\infty.
\]
We deduce that $\dd\nu\equiv 0$. Going back to~\eqref{eq:u-3sum}, we find that $u_0$ either belongs to $\mathscr M$ or is of the form~\eqref{eq:u0} and belongs to $L^2(\R)$, so that Lemma~\ref{lem:L2} implies that~\eqref{eq:norm} holds. We conclude that $u_0$ satisfies Definition~\ref{def:u0}.
\end{proof}

\bibliographystyle{alpha}
\bibliography{bibliography}

@article{GassotGM26,
  title={{A proof of the soliton resolution conjecture for the Benjamin--Ono equation}},
  author={Gassot, L. and G{\'e}rard, P. and Miller, P. D.},
  journal={arXiv preprint arXiv:2601.10488},
  year={2026}
}

@Article{BlackstoneGGM24a,
  	author = {Blackstone, E. and Gassot, L. and G\'erard, P. and Miller, P. D.},
	title = {The {B}enjamin-{O}no Initial-Value Problem for Rational Data with Application to Long Time Asymptotics and Scattering},
	JOURNAL={{Ann. Inst. H. {P}oincaré C, Anal. Non linéaire}},
    year={2025},
    doi={10.4171/AIHPC/169}
    }

@article{Gerard22,
	author = {Gérard, P.},
	doi = {10.2140/tunis.2023.5.593},
	fjournal = {Tunisian Journal of Mathematics},
	issn = {2576-7658,2576-7666},
	journal = {Tunis. J. Math.},
	mrclass = {35Q53 (35C05 37K15 47B35)},
	mrnumber = {4662323},
	number = {3},
	pages = {593--603},
	title = {An explicit formula for the {B}enjamin-{O}no equation},
	volume = {5},
	year = {2023}
}

@article{Gerard26,
title={{L}ectures on integrable equations of {B}enjamin-{O}no type},
author={G{\'e}rard, P.},
journal={EMS Surv. Math. Sci.},
year={2026},
doi={10.4171/EMSS/111}
}

@article{BadreddineKV25,
author={Badreddine, R. and Killip, R. and Vi\c{s}an, M.},
title={{O}rbital stability of {B}enjamin-{O}no multisolitons},
year={2025},
journal={Arxiv:2509.14153},
 archivePrefix={arXiv},
 primaryClass={math.AP},
url = {https://arXiv.org/abs/2509.14153},
}

@book {KleinS21,
    AUTHOR = {Klein, C. and Saut, J.-C.},
     TITLE = {Nonlinear dispersive equations---inverse scattering and {PDE} methods},
    SERIES = {Applied Mathematical Sciences},
    VOLUME = {209},
    ADDRESS = {Cham},
 PUBLISHER = {Springer},
      YEAR = {2021},
      ISBN = {978-3-030-91426-4; 978-3-030-91427-1},
   MRCLASS = {35-02 (35C08 35Q51 35Q53 35Q55 37K40)},
  MRNUMBER = {4400881},
       DOI = {10.1007/978-3-030-91427-1}
}

@article {Wu16a,
    AUTHOR = {Wu, Y.},
     TITLE = {Simplicity and finiteness of discrete spectrum of the
              {B}enjamin-{O}no scattering operator},
   JOURNAL = {SIAM J. Math. Anal.},
  FJOURNAL = {SIAM Journal on Mathematical Analysis},
    VOLUME = {48},
      YEAR = {2016},
    NUMBER = {2},
     PAGES = {1348--1367},
      ISSN = {0036-1410,1095-7154},
   MRCLASS = {35P25 (35Q53 35R30 47A40)},
  MRNUMBER = {3484397},
       DOI = {10.1137/15M1030649},
       URL = {https://doi.org/10.1137/15M1030649},
}

@article{Matsuno79,
	address = {DIRAC HOUSE, TEMPLE BACK, BRISTOL, ENGLAND BS1 6BE},
	affiliation = {MATSUNO, Y (Corresponding Author), KYOTO UNIV,FAC SCI,DEPT PHYS,KYOTO 606,JAPAN.},
	author = {Matsuno, Y.},
	da = {2024-05-17},
	doc-delivery-number = {GP929},
	doi = {10.1088/0305-4470/12/4/019},
	issn = {0305-4470},
	journal = {J. Phys. A Math. Gen.},
	journal-iso = {J. Phys. A-Math. Gen.},
	language = {English},
	number = {4},
	number-of-cited-references = {7},
	pages = {619-621},
	publisher = {IOP PUBLISHING LTD},
	research-areas = {Physics},
	times-cited = {88},
	title = {EXACT MULTI-SOLITON SOLUTION OF THE {B}ENJAMIN-{O}NO EQUATION},
	type = {Note},
	unique-id = {WOS:A1979GP92900019},
	usage-count-last-180-days = {0},
	usage-count-since-2013 = {5},
	volume = {12},
	web-of-science-categories = {Physics, Multidisciplinary; Physics, Mathematical},
	web-of-science-index = {Science Citation Index Expanded (SCI-EXPANDED)},
	year = {1979}
}

@article {Paulsen24,
    AUTHOR = {Paulsen, M. O.},
     TITLE = {Justification of the {B}enjamin-{O}no equation as an internal
              water waves model},
   JOURNAL = {Ann. PDE},
  FJOURNAL = {Annals of PDE. Journal Dedicated to the Analysis of Problems
              from Physical Sciences},
    VOLUME = {10},
      YEAR = {2024},
    NUMBER = {2},
     PAGES = {Paper No. 25, 129},
      ISSN = {2524-5317,2199-2576},
   MRCLASS = {35Q35 (76B55)},
  MRNUMBER = {4832425},
MRREVIEWER = {Qixiang\ Li},
       DOI = {10.1007/s40818-024-00190-z},
       URL = {https://doi-org.ezproxy.universite-paris-saclay.fr/10.1007/s40818-024-00190-z},
}

@Misc{amsmath,
  author =	 {{American Mathematical Society}},
  title =	 {User's Guide for the \texttt{amsmath} Package
                  (Version 2.0)},
  url =		 {ftp://ftp.ams.org/pub/tex/doc/amsmath/amsldoc.pdf},
  urldate =	 {2015-07-30},
  year =	 2002}

@article{AmickToland1991,
	author = {Amick, C. J. and Toland, J. F.},
	doi = {10.1007/BF02392447},
	journal = {Acta Mathematica},
	number = {1},
	pages = {107--126},
	publisher = {Springer},
	title = {{Uniqueness and related analytic properties for the Benjamin-Ono equation---a nonlinear Neumann problem in the plane}},
	volume = {167},
	year = {1991}
}

@article{Benjamin67,
	address = {40 WEST 20TH STREET, NEW YORK, NY 10011-4211},
	author = {Benjamin, T. B.},
	da = {2024-05-17},
	doc-delivery-number = {99234},
	doi = {10.1017/S002211206700103X},
	issn = {0022-1120},
	journal = {J. Fluid Mech.},
	journal-iso = {J. Fluid Mech.},
	language = {English},
	number = {3},
	number-of-cited-references = {20},
	pages = {559-592},
	publisher = {CAMBRIDGE UNIV PRESS},
	research-areas = {Mechanics; Physics},
	times-cited = {843},
	title = {INTERNAL WAVES OF PERMANENT FORM IN FLUIDS OF GREAT DEPTH},
	type = {Article},
	unique-id = {WOS:A19679923400010},
	usage-count-last-180-days = {3},
	usage-count-since-2013 = {34},
	volume = {29},
	web-of-science-categories = {Mechanics; Physics, Fluids \& Plasmas},
	web-of-science-index = {Science Citation Index Expanded (SCI-EXPANDED)},
	year = {1967}
}

@article {KillipLaurensVisan23-BO,
    AUTHOR = {Killip, R. and Laurens, T. and Vi\c{s}an, M.},
     TITLE = {Sharp well-posedness for the {B}enjamin-{O}no equation},
   JOURNAL = {Invent. Math.},
  FJOURNAL = {Inventiones Mathematicae},
    VOLUME = {236},
      YEAR = {2024},
    NUMBER = {3},
     PAGES = {999--1054},
      ISSN = {0020-9910,1432-1297},
   MRCLASS = {35C08},
  MRNUMBER = {4743514},
       URL = {https://doi.org/10.1007/s00222-024-01250-8},
}

@article{Sun2020,
 title={{Complete integrability of the Benjamin--Ono equation on the multi-soliton manifolds}},
  author={Sun, R.},
DOI={10.1007/s00220-021-03996-1},
url={https://doi.org/10.1007/s00220-021-03996-1},
volume={383},
journal={Communications in Mathematical Physics},
publisher={Springer},
year={2021},
month={4},
pages={1051--1092}
}

@incollection{Duren70book,
title = "Chapter 11 - {H}$^p$ Spaces Over A {H}alf-{P}lane",
author = "P. L. Duren",
series = "Pure and Applied Mathematics",
address = "London",
publisher = "Elsevier, Academic Press",
volume = "38",
pages = "187-199",
year = "1970",
booktitle = "Theory of H$^p$ Spaces",
issn = "0079-8169",
doi = "https://doi.org/10.1016/S0079-8169(08)62675-6"
}

@article{GerardHe26hierarchy,
  title={{An Explicit Formula for the Benjamin-Ono Hierarchy with Applications to Traveling Waves and Zero-Dispersion Limits}},
  author={Patrick Gérard and Jiao He},
  year={2026},
  journal={Arxiv:2604.20464},
   primaryClass={math.AP},
}

\end{document}